\documentclass[11pt]{amsart}
\usepackage{amssymb}
\usepackage[margin=1in]{geometry}
\usepackage[colorlinks]{hyperref}
\oddsidemargin=.7in \evensidemargin=.7in

\textwidth=6.2in

\addtolength{\oddsidemargin}{-.70in}
\setlength{\evensidemargin}{\oddsidemargin}


\newtheorem{theorem}{Theorem}[section]

\newtheorem{prop}[theorem]{Proposition}
\newtheorem{cor}[theorem]{Corollary}

\theoremstyle{definition}
\newtheorem{definition}[theorem]{Definition}

\theoremstyle{remark}
\newtheorem{remark}[theorem]{Remark}
\newtheorem{notation}[theorem]{Notation}
\numberwithin{equation}{section}
\begin{document}

\newcommand{\spacing}[1]{\renewcommand{\baselinestretch}{#1}\large\normalsize}
\spacing{1.14}

\title{On the Existence of Homogeneous Geodesics in Homogeneous Kropina Spaces}

\author {M. Hosseini}

\address{Department of Mathematics\\ Faculty of  Sciences\\ University of Isfahan\\ Isfahan\\ 81746-73441-Iran.} \email{hoseini\_masomeh@ymail.com}

\author {H. R. Salimi Moghaddam}

\address{Department of Mathematics\\ Faculty of  Sciences\\ University of Isfahan\\ Isfahan\\ 81746-73441-Iran.} \email{hr.salimi@sci.ui.ac.ir and salimi.moghaddam@gmail.com}

\keywords{Homogeneous geodesic, Homogeneous $(\alpha,\beta)$-space, Homogeneous Kropina space, Geodesic vector\\
AMS 2010 Mathematics Subject Classification: 53C22, 53C30, 53C60, 53B40.}


\begin{abstract}
Recently, it is shown that each regular homogeneous Finsler space $M$ admits at least one homogeneous geodesic through any point $o\in M$. The purpose of this article is to study the existence of homogeneous geodesics on singular homogeneous $(\alpha,\beta)$-spaces, specially, homogeneous Kropina spaces. We show that any homogeneous Kropina space admits at least one homogeneous geodesic through any point. It is shown that, under some conditions, the same result is true for any $(\alpha,\beta)$-homogeneous space. Also, in the case of homogeneous Kropina space of Douglas type, a necessary and sufficient condition for a vector to be a geodesic vector is given. Finally, as an example, homogeneous geodesics of $3$-dimensional non-unimodular real Lie groups equipped with a left invariant Randers metric of Douglas type are investigated.
\end{abstract}

\maketitle

\section{\textbf{Introduction}}
A geodesic $\gamma : \mathbb{R} \longrightarrow M$ of a Finslerian manifold $(M,F)$ is called a homogeneous geodesic if there exists a one-parameter group of isometries $\phi :\mathbb{R}\times M \longrightarrow M$ such that
\begin{equation*}
\gamma (t)=\phi (t,\gamma (0)), \qquad t\in \mathbb{R}.
\end{equation*}
The problem of the existence of homogeneous geodesics on homogeneous manifolds is an interesting and relatively old problem in differential geometry (see \cite{Dusek} and the references therein). In \cite{Kajzer}, in the case of Lie groups equipped with left invariant metrics, Kajzer showed that there exists at least one homogeneous geodesic passing the identity element. Szenthe, in \cite{Szenthe-1}, proved that if the Lie group is also compact, connected, semi-simple and of rank greater than one, then there are infinitely many homogeneous geodesics through the identity.
One year later, in 2001 (see \cite{Szenthe-2}), he generalized Kajzer's result for left invariant Lagrangian on compact connected Lie groups. Also in the case of compact connected semi-simple Lie groups of rank greater than one, he proved that there are infinitely many homogeneous geodesics for left invariant Lagrangian. In \cite{Kowalski-Szenthe}, Kowalski and Szenthe extended Kajzer's result to homogeneous Riemannian manifolds. They also proved that a homogeneous Riemannian manifold $M = G/H$ with semi-simple group $G$ admits $m = dim(M)$ mutually orthogonal homogeneous geodesics.\\
On the other hand, homogeneous geodesics have found many applications in mechanics. They are usually called relative
equilibria or stationary geodesic in physics literature (for more details see \cite{Arnold,Chossat-Lewis-Ortega-Ratiu,Crampina-Mestdaga,Lacomba,Ortega-Ratiu,Patrick,Toth}). For example in the cases of Lagrangian and Hamiltonian systems, T\'{oth} studied trajectories which are orbits of a one-parameter symmetry group $G$. He obtained criteria under which an orbit of a one-parameter subgroup of a symmetry group is a solution of the Euler-Lagrange or Hamiltonian equations (see \cite{Toth}). As another example, see \cite{Lacomba}, where Lacomba has used homogeneous geodesics in the framework of Smale's mechanical systems with symmetry.\\
Recently, homogeneous geodesics on homogeneous Finsler spaces have been studied by many researchers. In the case of homogeneous Finsler space, a criterion for a nonzero vector to be a geodesic vector, was given by Latifi in \cite{Latifi}. Yan and Deng \cite{Yan-Deng} investigated the existence of homogeneous geodesics on Randers spaces. They proved that  every homogeneous Randers space admits at least a homogeneous geodesic through any point.
In 2017, Yan extended this result to any  homogeneous Finsler space of odd dimension (see\cite{Yan}).\\
Recently, in \cite{Yan-Huang}, Yan and Huang proved that any regular homogeneous Finsler spaces admits at least one homogeneous geodesic through each point. They have used Legendre transformation, which is a bijection for regular Finsler metrics, to find nonzero geodesic vectors. \\
The family of $(\alpha,\beta)$-metrics is an interesting class of Finsler metrics which have many applications in physics. In this paper we study the problem of existence of homogeneous geodesics on singular homogeneous $(\alpha,\beta)$-spaces, specially, homogeneous Kropina spaces. It is shown that, there exists at least one homogeneous geodesic through any point of an arbitrary homogeneous Kropina space. We prove that, under some conditions, the same result is true for any $(\alpha,\beta)$-homogeneous space. In the case of homogeneous Kropina space of Douglas type, we show that a nonzero vector is a geodesic vector of the Kropina metric if and only if it is a geodesic vector of it's base Riemannian metric. Homogeneous geodesics of $3$-dimensional non-unimodular real Lie groups equipped with left invariant Randers metrics of Douglas type are investigated at the end of the paper.


\section{\textbf{Preliminaries}}
This section contains some preliminaries about Finsler spaces. As Chern mentioned in the title of his article \cite{Chern}, "Finsler geometry is just Riemannian geometry without the quadratic restriction". In fact, in 1854, Riemann has introduced a metric as a positive (when the vector $y$ is not equal to zero) function on the tangent bundle which is homogeneous of degree one in $y$. Finsler worked on this general case in his doctoral thesis (see \cite{Finsler}). But Finsler's name established in differential geometry with the book \cite{Cartan} written by Cartan.\\
Consider a differentiable manifold $M$ and denote its tangent bundle by $TM$. A Finsler metric on $M$ is a smooth function $F: TM\setminus \{0\} \longrightarrow \mathbb{R} ^{>0}$ such that
\begin{itemize}
\item[i)] $F(x,\lambda y)=\lambda F(x,y)$, \ \ \ \ for all $\quad  \lambda>0$,
\item[ii)] The hessian matrix $(g_{ij})=\left( \dfrac{1}{2} \dfrac{\partial ^2 F^2}{\partial y^i \partial y^j}\right) $ be positive definite for all $(x,y) \in TM\backslash \{0\}$.
\end{itemize}
An important family of Finsler metrics is  the class of $(\alpha , \beta)$-metrics which arises from Riemannian metrics and 1-forms (see \cite{Matsumoto}). More precisely, $(\alpha , \beta)$-metrics can be expressed in the form  $F = \alpha \phi (\frac{\beta}{\alpha})$, where $\phi : (-b_0 , b_0) \longrightarrow \mathbb{R^+}$ is a $C^\infty$ function and  $\alpha (x,y)=\sqrt{\textbf{\emph{a}}_{ij}y^iy^j}$ and  $\beta (x,y)=b_iy^i$, and $\textbf{\emph{a}}$ and $\beta$ are a Riemannian metric and a 1-form respectively, as follows
\begin{eqnarray}
 && \textbf{\emph{a}}=\textbf{\emph{a}}_{ij }dx^i \otimes  dx^j, \\
 &&\beta =b_i dx^i.
\end{eqnarray}
It can be shown that, $F$ is a regular Finsler metric if $\phi $ satisfying
\begin{equation}\label{alpha-beta metric condition}
\phi (s) - s \phi ^{'} (s) + ( b^2 - s^2 ) \phi ^{''} (s) > 0, \qquad  \vert s \vert  \leq b < b_0,
\end{equation}
and  $\Vert \beta _x\Vert _{\alpha}=\sqrt{\textbf{\emph{a}}^{ij} (x)b_i(x) b_j(x) } < b_0$ for any $x \in M$, where  $(\textbf{\emph{a}}^{ij}(x))$ is the inverse matrix of $(\textbf{\emph{a}}_{ij}(x))$ (for more details see  \cite{Chern-Shen}).\\
In the above definition, if $\phi$ does not satisfy the condition \ref{alpha-beta metric condition} or $\phi(0)$ is not defined, then the $(\alpha , \beta)$-metric is called a singular finsler metric. \\
For instance, if $\phi (s) = 1+s$ or $\phi (s) = \frac{1}{s}$, then we obtain two important classes of Finsler metrics,  called Randers metrics $F=\alpha + \beta $ (which are regular)  and Kropina metrics $F=\frac{\alpha ^2}{\beta}$ (which are singular), respectively. \\
Easily we can see the Riemannian metric $\textbf{\emph{a}}$ induces an inner product on any cotangent
space $T^\ast_xM$. This inner product induces a linear isomorphism between $T^\ast_xM$  and $T_xM$ (see \cite{Deng-Hou-2}). Using this isomorphism, the 1-form $\beta$ corresponds to a vector field $\tilde{X}$ on $M$ such that
\begin{equation}
\textbf{\emph{a}}(y,\tilde{X}(x)) = \beta (x,y).
\end{equation}
Now we can rewrite the Randers and Kropina metrics as follows
\begin{equation}\label{Randers}
F(x,y)=\sqrt{\textbf{\emph{a}}\left( y ,y \right)}+\textbf{\emph{a}}\left(\tilde{X}\left( x\right) ,y\right) ,
\end{equation}
\begin{equation}\label{Kropina}
F(x,y)=\dfrac{\textbf{\emph{a}}\left( y ,y\right)}{\textbf{\emph{a}}\left(\tilde{X}\left( x\right) ,y\right)}.
\end{equation}
Recently, Kropina metrics are known as  a singular solution of the Zermelo's navigation problem on some Riemannian manifold $(M,h)$ under the influence of a vector field $W$ with $\Vert W\Vert _h=1$ . The pair $(h,W)$ is called the navigation data of Kropina metric. In fact, there is one to one correspondence between Kropina metric $F$ and the navigation data $(h,W)$ (see \cite{Zhang-Shen}). In the present paper, we focus on a class of Kropina metrics which are obtained from the initial Zermelo's navigation problem in terms of a Riemannian metric $h$ and a unit vector field $W$. \\
Homogeneous Finsler spaces are defined similar to the Riemannian case. A homogeneous Finsler space is a Finslerian manifold $(M,F)$ on which its isometry group $I(M,F)$ acts transitively on $M$. So, a connected homogeneous Finsler space $M$ can be considered of the form $M=G/H$ where $G$ is a connected Lie group of isometries of $M$, acting transitively on $M$, and $H$ is the isotropy subgroup of a point in $M$.
A homogeneous space $M=G/H$ is said to be  reductive if there exists an $Ad(H)$-invariant decomposition $\mathfrak{g}=\mathfrak{m}+\mathfrak{h}$, where $\frak{g}$ and $\frak{h}$ denote the Lie algebras of $G$ and $H$, respectively and $+$ is the direct sum of subspaces.
It is well known that any homogeneous Riemannian manifold is a reductive homogeneous space (see \cite{Kobayashi-Nomizu,Kowalski-Szenthe}). The same proposition is true for any homogeneous Finsler space (see \cite{Latifi}).\\
Now we study the relation between the isometry group of Kropina metrics and the isometry group the Riemannian metric $h$.
\begin{prop}
Suppose that $(M,F)$ is a Kropina space which arises from a navigation data $(h,W)$. Then the isometry group of $(M,F)$ is a closed subgroup of the isometry group of Riemannian manifold $(M,h)$.
\end{prop}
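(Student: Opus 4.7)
The plan is to exploit the one-to-one correspondence between a Kropina metric $F$ and its navigation data $(h,W)$ recalled just above the proposition. Zermelo's condition $\|y/F - W\|_h = 1$ together with $\|W\|_h = 1$ gives, after a short algebraic expansion, the explicit formula
\begin{equation*}
F(x,y) = \frac{h(y,y)}{2\,h(y,W(x))},
\end{equation*}
so that $F$ is determined algebraically by $(h,W)$ and, by the bijection from \cite{Zhang-Shen}, $(h,W)$ is in turn recovered from $F$.

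Given $\phi \in I(M,F)$, I would consider the pulled-back pair $(\phi^{\ast} h, \phi^{\ast} W)$. A direct computation shows that $\phi^{\ast} W$ is a unit vector field with respect to $\phi^{\ast} h$, and that the displayed formula applied to $(\phi^{\ast} h, \phi^{\ast} W)$ recovers $\phi^{\ast} F$. Since $\phi^{\ast} F = F$ by hypothesis, uniqueness of the navigation data forces $\phi^{\ast} h = h$ and $\phi^{\ast} W = W$. In particular $\phi \in I(M,h)$, which proves the set-theoretic inclusion $I(M,F) \subseteq I(M,h)$. The converse direction is automatic from the same formula: any element of $I(M,h)$ that preserves $W$ preserves the right-hand side and therefore $F$, yielding
\begin{equation*}
I(M,F) = \{\phi \in I(M,h) : \phi_{\ast} W = W\}.
\end{equation*}

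To finish, I would observe that for each fixed $x \in M$ the condition $d\phi_{x}(W(x)) = W(\phi(x))$ depends continuously on $\phi$ in the Lie group topology supplied by the Myers--Steenrod-type theorem for Finsler manifolds, hence defines a closed subset of $I(M,h)$. Intersecting these closed subsets over all $x \in M$ presents $I(M,F)$ as a closed subgroup of $I(M,h)$. The main technical point I expect to require care is the uniqueness half of the Kropina/navigation correspondence: one must verify that the bijection of \cite{Zhang-Shen} is natural under smooth diffeomorphism actions, so that $\phi^{\ast} F = F$ translates literally into $\phi^{\ast} h = h$ and $\phi^{\ast} W = W$ rather than into some weaker pointwise identification. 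Once this naturality is secured, the remainder of the argument is formal.
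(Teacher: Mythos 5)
Your argument is correct, but it takes a different route from the paper. Both proofs hinge on the same first step: using the uniqueness of the navigation data (the paper cites Lemma 4 of Yoshikawa--Sabau, which is exactly the ``naturality'' point you flag) to identify $I(M,F)$ with $\{\phi \in I(M,h) : \phi_{\ast}W = W\}$. Where you diverge is in proving closedness: you do it directly, exhibiting $I(M,F)$ as the stabilizer of the vector field $W$ under the tangent action of $I(M,h)$, i.e.\ as an intersection of closed conditions $d\phi_x(W(x)) = W(\phi(x))$. The paper instead introduces the auxiliary \emph{regular} Randers metric $\tilde{F}$ built from $h$ and $X = \tfrac{1}{2}W$, shows $I(M,F) = I(M,\tilde{F})$, and then quotes Deng's Proposition 7.1 for regular Finsler metrics to get closedness in $I(M,h)$; this detour also delivers the identity $I(M,F) = I(M,\tilde{F})$, which the paper reuses in the subsequent remark to conclude that the homogeneous Kropina space is reductive. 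Your approach is more self-contained and avoids invoking results about regular Finsler metrics for a singular one, which is a genuine advantage; one small correction, though: the topology in which you must check closedness is that of $I(M,h)$, so the Lie group structure you need comes from the classical Riemannian Myers--Steenrod theorem for $(M,h)$ (giving continuity of the action of $I(M,h)$ on $TM$), not from a Finsler-type Myers--Steenrod theorem --- indeed the latter is usually stated only for regular metrics and is better avoided here.
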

\begin{proof}
If $(h,W)$ is the navigation data of $F$, then $\Vert W\Vert _h=1$. We put $X=\dfrac{1}{2}W$ and consider the Randers metric $\tilde{F}$ which arises from the Riemannian metric $h$ and the vector field $X$ on $M$.  We claim that $F$ and $\tilde{F}$ have the same isometry group. It suffices to show that $\phi \in I(M,F)$ if and only if $\phi \in I(M,\tilde{F})$. Let $\phi \in I(M,F)$, then according to lemma $4$ of \cite{Yoshikawa-Sabau}, $\phi $ is an isometry of $h$ which preserves $W$ and so $X$ is $\phi$-invariant. It follows from proposition $3.2$ of \cite{Salimi-Results}, $\phi$ belongs to the isometry group of $(M,\tilde{F})$. Conversely, if  $\phi \in I(M,\tilde{F})$, then using proposition $7.1$ of \cite{Deng} and  proposition $3.2$ of \cite{Salimi-Results}, $\phi \in I(M,h)$ and $W$ is $\phi$-invariant. Again, with  lemma $4$ of \cite{Yoshikawa-Sabau}, $\phi \in I(M,F)$.
This finally implies by Proposition  $7.1$ of \cite{Deng} that $ I(M,F)$ is a closed subgroup of the isometry group of the Riemannian manifold $(M,h)$.
\end{proof}
\begin{remark}
According to the previous proposition, $I(M,F)=I(M,\tilde{F})$. This implies that every such a homogeneous Kropina space is reductive.
\end{remark}  
As in the Riemannian homogeneous spaces the definition of homogeneous geodesic can be extended to homogeneous Finsler spaces as follows:
\begin{definition}
Let $(M=G/H,F)$ be a homogeneous Finsler space. A non-zero vector $X\in \mathfrak{g}$ is said to be a geodesic vector if the curve $\gamma (t)=\exp tX .o$ is a geodesic of $(M=G/H,F)$.
\end{definition}

Consider a Riemannian homogeneous space $(M=G/H,\textbf{\emph{a}})$ with a reductive decomposition $\mathfrak{g}=\mathfrak{m}+\mathfrak{h}$. In \cite{Kowalski-Vanhecke}, Kowalski and Vanhecke proved that $X\in \mathfrak{g} \setminus \{0\}$ is a geodesic vector if and only if
 \begin{equation}
 \textbf{\emph{a}}([X,Y]_{\mathfrak{m}}, X_{\mathfrak{m}})=0, \qquad \forall Y \in \mathfrak{m},
\end{equation}
where the subscript $\mathfrak{m}$ indicates the projection into the subspace $\mathfrak{m}$.\\
This proposition is generalized to homogeneous Finsler spaces by Latifi as follows (see \cite{Latifi}):
\begin{theorem}
A nonzero $X\in \mathfrak{g}$ is a geodesic vector if and only if
\begin{equation}
 g_{X_{\mathfrak{m}}}([X,Y]_{\mathfrak{m}}, X_{\mathfrak{m}})=0, \qquad \forall Y \in \mathfrak{g},
 \end{equation}
where $g$ denotes the fundamental tensor of $F$ on $\mathfrak{m}$.
\end{theorem}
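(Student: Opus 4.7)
The plan is to exploit the variational characterization of geodesics (critical points of $E(c)=\int_0^T F^2(\dot c)\,dt$ among endpoint-fixing variations) by choosing a family of test variations built from the group action, and then to reduce the computation to $\mathfrak{m}$ using the $G$-invariance of $F$.

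First, since $\exp(tX)\in G$ acts as an isometry, it carries $o$ to $\gamma(t)$ and $\dot\gamma(0)=X_{\mathfrak{m}}$ to $\dot\gamma(t)$, so $F(\dot\gamma(t))\equiv F(X_{\mathfrak{m}})$. (When $X_{\mathfrak{m}}=0$ the curve is constant and the assertion is vacuous, so we may assume $X_{\mathfrak{m}}\neq 0$.) For each $Y\in\mathfrak{g}$ and each $f\in C^\infty([0,T])$ with $f(0)=f(T)=0$ I would take the endpoint-fixing variation
\[
\gamma_s(t)=g(s,t)\cdot o,\qquad g(s,t):=\exp(tX)\exp\bigl(sf(t)Y\bigr).
\]
Setting $u(s,t):=g(s,t)^{-1}\partial_t g(s,t)\in\mathfrak{g}$, one has $\dot\gamma_s(t)=(L_{g(s,t)})_*u(s,t)_{\mathfrak{m}}$, and the isometry property of $L_{g(s,t)}$ reduces the Finsler length to a fixed Minkowski norm $F_o$ on $\mathfrak{m}$:
\[
F\bigl(\dot\gamma_s(t)\bigr)=F_o\bigl(u(s,t)_{\mathfrak{m}}\bigr).
\]
A direct Maurer--Cartan computation yields $u(s,t)=\mathrm{Ad}(\exp(-sf(t)Y))X+sf'(t)Y$, so that $u(0,t)=X$ and $\partial_s u|_{s=0}=-f(t)[Y,X]+f'(t)Y$.

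Applying the identity $\tfrac{d}{ds}|_{s=0}F_o^2(v+sw)=2g_v(v,w)$ and integrating gives
\[
\left.\tfrac{d}{ds}\right|_{s=0} E(\gamma_s)=-2\,g_{X_{\mathfrak{m}}}\bigl(X_{\mathfrak{m}},[Y,X]_{\mathfrak{m}}\bigr)\int_0^T f(t)\,dt+2\,g_{X_{\mathfrak{m}}}\bigl(X_{\mathfrak{m}},Y_{\mathfrak{m}}\bigr)\int_0^T f'(t)\,dt.
\]
The second integral equals $f(T)-f(0)=0$; the arbitrariness of $f$ in the first forces $g_{X_{\mathfrak{m}}}(X_{\mathfrak{m}},[Y,X]_{\mathfrak{m}})=0$ for every $Y\in\mathfrak{g}$, which by skew-symmetry of $[\,,\,]$ and symmetry of $g_v(\cdot,\cdot)$ is equivalent to the stated identity. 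For the converse, a basis $\{e_1,\dots,e_n\}$ of $\mathfrak{m}$ lets one decompose any smooth endpoint-vanishing variation field as $V(t)=\sum_i\phi_i(t)(L_{\exp(tX)})_*e_i$, so every admissible variation is a finite linear combination of the test variations above; by linearity of the first variation, vanishing on the test family implies vanishing on all of them, and $\gamma$ is a geodesic.

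The main obstacle will be the Maurer--Cartan computation of $u(s,t)$ together with the clean identification $\dot\gamma_s(t)=(L_{g(s,t)})_*u(s,t)_{\mathfrak{m}}$, both of which require careful bookkeeping between left translation on $G$ and its descent through the reductive projection $\mathfrak{g}\to\mathfrak{m}$; the rest is a routine linear-algebra consequence of the $G$-invariance of $F$.
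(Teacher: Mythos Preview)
The paper does not supply its own proof of this theorem; it is quoted verbatim from Latifi \cite{Latifi} and attributed there. Your variational argument is essentially the standard one (and is in spirit the approach taken in \cite{Latifi} and its Riemannian predecessor \cite{Kowalski-Vanhecke}): build variations from the group action, use $G$-invariance to pull the Finsler norm back to the fixed Minkowski norm $F_o$ on $\mathfrak{m}$, differentiate using $\tfrac{d}{ds}\big|_{s=0}F_o^2(v+sw)=2g_v(v,w)$, and read off the geodesic-vector condition from the arbitrariness of the bump function. The Maurer--Cartan computation and the identification $\dot\gamma_s=(g(s,t))_*\,u(s,t)_{\mathfrak m}$ are correct as you have them.

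One point in the converse deserves a sentence more of care. You assert that ``every admissible variation is a finite linear combination of the test variations,'' but variations themselves do not add; what is true is that the \emph{first variation} $\delta E$ depends only on the variation field $V=\partial_s\gamma_s|_{s=0}$ and is linear in $V$. Since the field produced by your test family with $Y=e_i$ and bump $f$ is exactly $f(t)\,(\exp tX)_*e_i$, and these span all endpoint-vanishing fields along $\gamma$, linearity of $\delta E$ in $V$ gives the conclusion. It is also worth remarking (or checking) that for $Y\in\mathfrak h$ the variation field vanishes identically while the formula you derived reads $-2g_{X_{\mathfrak m}}(X_{\mathfrak m},[Y,X]_{\mathfrak m})\int f$; consistency then forces $g_{X_{\mathfrak m}}(X_{\mathfrak m},[Y,X]_{\mathfrak m})=0$ automatically for $Y\in\mathfrak h$, which is exactly the infinitesimal $\mathrm{Ad}(H)$-invariance of $F_o$ on $\mathfrak m$. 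This explains why the corollary immediately following the theorem (quantifying only over $Y\in\mathfrak m$) is equivalent.
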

As a consequence he proved the following corollary.
\begin{cor}
A nonzero $X\in \mathfrak{g}$ is a geodesic vector if and only if
\begin{equation}
 g_{X_{\mathfrak{m}}}([X,Y]_{\mathfrak{m}}, X_{\mathfrak{m}})=0,\qquad \forall Y \in \mathfrak{m}.
 \end{equation}
\end{cor}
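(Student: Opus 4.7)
The plan is to deduce this corollary directly from the preceding theorem by showing that the vanishing condition on $\mathfrak{m}$ automatically extends to all of $\mathfrak{g}$. The forward implication is immediate, since $\mathfrak{m}\subset\mathfrak{g}$. For the converse, I would take an arbitrary $Y\in\mathfrak{g}$, decompose it as $Y=Y_{\mathfrak{m}}+Y_{\mathfrak{h}}$, and note that by linearity in $Y$ the problem reduces to proving
\begin{equation*}
g_{X_{\mathfrak{m}}}([X,Z]_{\mathfrak{m}},X_{\mathfrak{m}})=0\qquad\forall\,Z\in\mathfrak{h},
\end{equation*}
since the $Y_{\mathfrak{m}}$-piece is furnished by hypothesis. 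Once this is in hand, Latifi's theorem applies and $X$ is a geodesic vector.

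To produce the vanishing for $Z\in\mathfrak{h}$, I would first strip the $\mathfrak{h}$-component of $X$ from the bracket. Writing $X=X_{\mathfrak{m}}+X_{\mathfrak{h}}$ and using the reductive relations $[\mathfrak{h},\mathfrak{m}]\subset\mathfrak{m}$ and $[\mathfrak{h},\mathfrak{h}]\subset\mathfrak{h}$ (which are valid here because the previous remark ensures the homogeneous Kropina space is reductive, and more generally because Latifi's setup assumes a reductive decomposition), one obtains $[X,Z]_{\mathfrak{m}}=[X_{\mathfrak{m}},Z]=-[Z,X_{\mathfrak{m}}]$. So the task collapses to showing $g_{X_{\mathfrak{m}}}([Z,X_{\mathfrak{m}}],X_{\mathfrak{m}})=0$.

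The genuine input is the $\mathrm{Ad}(H)$-invariance of the Minkowski norm $F_{o}$ induced by $F$ on $\mathfrak{m}$: for every $h\in H$ and $V\in\mathfrak{m}$ one has $F_{o}^{2}(\mathrm{Ad}(h)V)=F_{o}^{2}(V)$. Differentiating $F_{o}^{2}(\mathrm{Ad}(\exp tZ)X_{\mathfrak{m}})\equiv F_{o}^{2}(X_{\mathfrak{m}})$ at $t=0$ and combining with the standard Finslerian identity $g_{y}(y,v)=\tfrac{1}{2}\tfrac{d}{dt}\big|_{t=0}F^{2}(y+tv)$ (a consequence of $y^{i}C_{ijk}=0$) produces $g_{X_{\mathfrak{m}}}(X_{\mathfrak{m}},[Z,X_{\mathfrak{m}}])=0$, which by symmetry of $g_{X_{\mathfrak{m}}}$ is exactly what is needed.

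I do not expect any serious obstacle. The only delicate point to keep track of is that in the identity $[X,Z]_{\mathfrak{m}}=[X_{\mathfrak{m}},Z]$ the $\mathfrak{h}$-component of $X$ contributes nothing to the $\mathfrak{m}$-projection, and this is precisely guaranteed by the reductive hypothesis. The rest is a routine differentiation of the isotropy invariance of the fundamental tensor.
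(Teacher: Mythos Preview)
Your argument is correct and is precisely the standard deduction of this corollary from Latifi's theorem: reduce to $Z\in\mathfrak{h}$ by linearity, use the reductive relations to rewrite $[X,Z]_{\mathfrak{m}}=-[Z,X_{\mathfrak{m}}]$, and then differentiate the $\mathrm{Ad}(H)$-invariance of $F$ on $\mathfrak{m}$ to obtain $g_{X_{\mathfrak{m}}}(X_{\mathfrak{m}},[Z,X_{\mathfrak{m}}])=0$.

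Note, however, that the paper does not supply its own proof here: the corollary is simply quoted from \cite{Latifi} (``As a consequence he proved the following corollary''), so there is no in-paper argument to compare against. Your write-up is exactly the expected justification and would serve as a proof where the paper gives none.
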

\begin{notation}
For simplicity, from now on we use the notation $\langle,\rangle$ for the Riemannian metric $\textbf{\emph{a}}$.
\end{notation}

\section{\textbf{Homogeneous geodesic in homogeneous Kropina spaces}}
As already mentioned in the introduction, Kowalsky and Szenthe showed that any homogeneous Riemannian space admits at least one homogeneous geodesic on each origin point (see \cite{Kowalski-Szenthe}). Yan and S. Deng generalized this result to homogeneous Randers spaces in \cite{Yan-Deng}. In this section we show that the same result is true for homogeneous Kropina spaces. Firstly, we give a necessary and sufficient condition for a nonzero vector in a homogeneous Kropina spaces to be a geodesic vector.

\begin{prop}\label{geodesic vector of Kropina}
Suppose that $(G/H,F)$ is a homogeneous Finsler space and $F$ is a Kropina metric arising from an invariant Riemannian metric $\left\langle ,\right\rangle $ and an invariant vector field $\tilde{X}$ such that $X=\tilde{X}(H)$. Then, a nonzero vector $Y\in \mathfrak{g}$ is a geodesic vector if and only if
\begin{eqnarray} \label{equivalent of geodesic lemma}
\left\langle [Y,Z]_{\mathfrak{m}},\dfrac{2}{F(Y_{\mathfrak{m}})}Y_{\mathfrak{m}}-X\right\rangle =0
\end{eqnarray}
holds for every $Z\in \mathfrak{m}$.
\end{prop}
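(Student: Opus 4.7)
The plan is to invoke Latifi's criterion in its streamlined form (Corollary 2.3 of the excerpt): a nonzero $Y\in\mathfrak{g}$ is a geodesic vector if and only if $g_{Y_{\mathfrak{m}}}([Y,Z]_{\mathfrak{m}},\,Y_{\mathfrak{m}})=0$ for every $Z\in\mathfrak{m}$. So the whole task reduces to converting this general Finslerian condition into the Riemannian inner-product condition (\ref{equivalent of geodesic lemma}) by computing the Kropina fundamental tensor explicitly, contracted against its own reference vector.

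The key computation is the following. Since $F^{2}$ is positively $2$-homogeneous in $y$, one has
\begin{equation*}
g_{y}(y,v) \;=\; \tfrac{1}{2}\left.\frac{d}{ds}\right|_{s=0} F^{2}(y+sv)
\end{equation*}
for any tangent vector $v$. I substitute the Kropina expression $F^{2}(y)=\langle y,y\rangle^{2}/\langle\tilde X,y\rangle^{2}$, set $\alpha^{2}:=\langle y,y\rangle$ and $\beta:=\langle\tilde X,y\rangle$, and apply the quotient rule. The two resulting terms combine to give
\begin{equation*}
g_{y}(y,v) \;=\; \frac{\alpha^{2}}{\beta^{2}}\left\langle\, 2y - F(y)\,\tilde X,\; v\,\right\rangle.
\end{equation*}
This is a single short calculation, not really an obstacle, but it is the technical heart of the proof.

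It remains to specialize $y=Y_{\mathfrak{m}}$ and $v=[Y,Z]_{\mathfrak{m}}$ and to identify $\tilde X$ at the origin with $X\in\mathfrak{m}$, which the hypothesis of the proposition provides. Because $F$ is (by assumption) defined at $Y_{\mathfrak{m}}$, we have $\langle Y_{\mathfrak{m}},Y_{\mathfrak{m}}\rangle\ne 0$ and $\langle X,Y_{\mathfrak{m}}\rangle\ne 0$, so both the scalar prefactor $\alpha^{2}/\beta^{2}$ and the value $F(Y_{\mathfrak{m}})$ are nonzero. Dividing through by $F(Y_{\mathfrak{m}})$ inside the inner product rewrites the vanishing of $g_{Y_{\mathfrak{m}}}(Y_{\mathfrak{m}},[Y,Z]_{\mathfrak{m}})$ as exactly equation (\ref{equivalent of geodesic lemma}). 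The only bookkeeping issue worth flagging is confirming that the admissibility domain of the singular metric $F$ is respected at $Y_{\mathfrak{m}}$, so that the division by $F(Y_{\mathfrak{m}})$ is legitimate; this is automatic from the very definition of a geodesic vector for a Kropina metric.
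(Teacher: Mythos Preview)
Your proof is correct and follows essentially the same approach as the paper: invoke Latifi's criterion and then compute $g_{Y_{\mathfrak m}}(Y_{\mathfrak m},[Y,Z]_{\mathfrak m})$ for the Kropina metric, observing that the nonzero scalar prefactor can be discarded. The only difference is computational: the paper quotes the full Kropina fundamental tensor $g_y(u,v)$ from \cite{Salimi} and then simplifies a long expression, whereas you exploit the identity $g_y(y,v)=\tfrac{1}{2}\partial_s|_{s=0}F^2(y+sv)$ to bypass the second derivatives and land directly on $\frac{\alpha^2}{\beta^2}\langle 2y-F(y)X,\,v\rangle$; this is a cleaner and more self-contained route to the same endpoint.
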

\begin{proof}
A nonzero vector $Y\in \mathfrak{g}$ is a geodesic vector if and only if
\begin{eqnarray*}
g_{Y_{\mathfrak{m}}}( [Y,Z]_{\mathfrak{m}},Y_{\mathfrak{m}}) =0 \qquad \forall Z\in \mathfrak{m}.
\end{eqnarray*}
Using the formula (2.5) in \cite{Salimi} we have
\begin{align*}
g_{Y_\frak{m}}([Y,Z]_\frak{m},Y_\frak{m}) =& \dfrac{1}{\left\langle X,Y_\frak{m}\right\rangle ^4}\{  \left( 2\left\langle Y_\frak{m},[Y,Z]_\frak{m}\right\rangle \left\langle Y_\frak{m},X\right\rangle-\left\langle [Y,Z]_\frak{m},X\right\rangle \left\langle Y_\frak{m},Y_\frak{m}\right\rangle \right)\\
&\left( 2\left\langle Y_\frak{m},Y_\frak{m}\right\rangle \left\langle Y_\frak{m},X\right\rangle -\left\langle Y_\frak{m},X\right\rangle \left\langle Y_\frak{m},Y_\frak{m}\right\rangle \right) + \left\langle Y_\frak{m},Y_\frak{m}\right\rangle \left\langle Y_\frak{m},X\right\rangle \\
&\left( 2\left\langle [Y,Z]_\frak{m},Y_\frak{m}\right\rangle \left\langle Y_\frak{m},X\right\rangle +2\left\langle Y_\frak{m},Y_\frak{m}\right\rangle \left\langle [Y,Z]_\frak{m},X\right\rangle -2\left\langle Y_\frak{m},X\right\rangle ,\left\langle Y_\frak{m},[Y,Z]_\frak{m}\right\rangle  \right)\\
 &-2\left\langle Y_\frak{m},Y_\frak{m}\right\rangle \left\langle [Y,Z]_\frak{m},X\right\rangle \left( 2\left\langle Y_\frak{m},Y_\frak{m}\right\rangle \left\langle Y_\frak{m},X\right\rangle -\left\langle Y_\frak{m},X\right\rangle \left\langle Y_\frak{m},Y_\frak{m}\right\rangle  \right) \} \\
&=\dfrac{\left\langle Y_\frak{m},Y_\frak{m}\right\rangle }{\left\langle X,Y_\frak{m}\right\rangle ^3}\left\langle [Y,Z]_\frak{m},2\left\langle Y_\frak{m},X\right\rangle Y_\frak{m}-\left\langle Y_\frak{m},Y_\frak{m}\right\rangle X\right\rangle \\
&=\frac{F^3(Y_\frak{m})}{\left\langle Y_\frak{m},Y_\frak{m}\right\rangle} \left\langle [Y,Z]_\frak{m},\dfrac{2}{F(Y_\frak{m})}Y_\frak{m}-X\right\rangle .
\end{align*}
For $Y_\frak{m}\neq 0$ the inequality $\frac{F^3(Y_\frak{m})}{\left\langle Y_\frak{m},Y_\frak{m}\right\rangle ^2}\neq 0$ implies that for any $Z\in \mathfrak{m}$,
 $g_{Y_{\mathfrak{m}}}( [Y,Z]_{\mathfrak{m}},Y_{\mathfrak{m}}) =0 $ if and only if $\left\langle [Y,Z]_{\mathfrak{m}},\dfrac{2}{F(Y_{\mathfrak{m}})}Y_{\mathfrak{m}}-X\right\rangle =0$.
\end{proof}

A direct consequence of the above proposition is the following corollary.

\begin{cor}
Consider the assumption of the previous proposition. Then the vector $X$ is a geodesic vector of $(G/H,\left\langle ,\right\rangle )$ if and only if it is a geodesic vector of $(G/H,F)$.
\end{cor}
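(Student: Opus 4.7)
The plan is to deduce this corollary directly from Proposition \ref{geodesic vector of Kropina} by specializing the geodesic-vector condition to $Y = X$ and comparing it with the Kowalski--Vanhecke criterion in the Riemannian setting. Two observations are required: first, that $X_\mathfrak{m} = X$, and second, that $F(X_\mathfrak{m}) = 1$ so the coefficient appearing in equation \eqref{equivalent of geodesic lemma} is nonzero.

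First I would note that since $\tilde X$ is a $G$-invariant vector field on $G/H$, the value $X = \tilde X(H) \in T_H(G/H)$ is naturally identified with an element of $\mathfrak{m}$ under the reductive decomposition $\mathfrak{g} = \mathfrak{m} + \mathfrak{h}$. Hence $X_\mathfrak{m} = X$. Next I would compute $F(X)$ using the formula \eqref{Kropina}, namely
\begin{equation*}
F(X) = \frac{\langle X, X\rangle}{\langle \tilde X(H), X\rangle} = \frac{\langle X, X\rangle}{\langle X, X\rangle} = 1,
\end{equation*}
where we use $X \neq 0$ (which holds because $X$ must be nonzero to serve as a candidate geodesic vector; if $X = 0$ the statement is vacuous).

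With these two facts in hand, Proposition \ref{geodesic vector of Kropina} applied to $Y = X$ says that $X$ is a geodesic vector of $(G/H, F)$ if and only if
\begin{equation*}
\left\langle [X,Z]_\mathfrak{m}, \tfrac{2}{F(X)} X - X \right\rangle = \left\langle [X,Z]_\mathfrak{m}, X\right\rangle = 0 \qquad \forall Z \in \mathfrak{m}.
\end{equation*}
But this last identity is precisely the Kowalski--Vanhecke criterion recalled before Theorem 2.1 for $X$ to be a geodesic vector of the Riemannian homogeneous space $(G/H, \langle, \rangle)$, again using $X_\mathfrak{m} = X$. Hence the two notions of geodesic vector coincide for the vector $X$.

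There is no real obstacle here; the proof is essentially a two-line specialization of the preceding proposition. The only point requiring mild care is verifying $X \in \mathfrak{m}$, which must be justified from invariance of $\tilde X$ and the reductive decomposition, together with the evaluation $F(X) = 1$ that depends on the specific form \eqref{Kropina} of the Kropina metric expressed in terms of the base Riemannian metric and the vector field $\tilde X$.
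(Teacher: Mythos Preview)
Your proof is correct and is exactly the natural specialization the paper has in mind; the paper itself gives no explicit argument and simply states the corollary as ``a direct consequence of the above proposition,'' which is precisely what you carry out by setting $Y=X$, noting $X\in\mathfrak{m}$ so $X_\mathfrak{m}=X$, and using $F(X)=\langle X,X\rangle/\langle X,X\rangle=1$ to reduce \eqref{equivalent of geodesic lemma} to the Kowalski--Vanhecke condition.
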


\begin{cor}
Suppose that $(G/H,F)$ is a homogeneous Kropina space as proposition \ref{geodesic vector of Kropina} such that $F$
is of Douglas type. Then a nonzero vector $Y \in \mathfrak{g}$ is a geodesic vector of $(G/H,F)$ if and only if it is
a geodesic vector of $(G/H,\left\langle ,\right\rangle )$.
\end{cor}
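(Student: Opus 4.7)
The plan is to reduce Proposition~\ref{geodesic vector of Kropina} to the classical Kowalski--Vanhecke criterion for the underlying Riemannian homogeneous space by showing that the term involving $X$ drops out under the Douglas hypothesis. Indeed, by Proposition~\ref{geodesic vector of Kropina} the vector $Y$ is a geodesic vector of $F$ iff $\langle [Y,Z]_{\mathfrak{m}}, \tfrac{2}{F(Y_{\mathfrak{m}})}Y_{\mathfrak{m}} - X\rangle = 0$ for all $Z\in\mathfrak{m}$, while Kowalski--Vanhecke says $Y$ is a geodesic vector of $\langle,\rangle$ iff $\langle [Y,Z]_{\mathfrak{m}}, Y_{\mathfrak{m}}\rangle = 0$ for all $Z\in\mathfrak{m}$. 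Since $2/F(Y_{\mathfrak{m}})$ is a nonzero scalar, the desired equivalence is immediate provided one proves
\begin{equation}\label{keyclaim}
\langle [Y,Z]_{\mathfrak{m}}, X\rangle = 0 \qquad \text{for all } Y\in \mathfrak{g},\ Z\in \mathfrak{m}.
\end{equation}

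To establish~(\ref{keyclaim}), I would invoke the known characterisation of Douglas $(\alpha,\beta)$-metrics applied to the Kropina case $\phi(s)=1/s$: $F$ is of Douglas type if and only if $\tilde X$ is parallel with respect to the Levi--Civita connection of $\langle,\rangle$. The Nomizu formula for the Levi--Civita connection on the reductive homogeneous space $G/H$ then translates $\nabla_U \tilde X|_o = 0$ into
$$\langle [U,X]_{\mathfrak{m}}, W\rangle + \langle [W,U]_{\mathfrak{m}}, X\rangle + \langle U,[W,X]_{\mathfrak{m}}\rangle = 0 \qquad \forall\, U,W\in\mathfrak{m}.$$
Swapping $U\leftrightarrow W$ in this identity and subtracting cancels the first and third terms (by symmetry of $\langle,\rangle$) and leaves $2\langle [U,W]_{\mathfrak{m}}, X\rangle = 0$; hence $\langle [\mathfrak{m},\mathfrak{m}]_{\mathfrak{m}}, X\rangle = 0$.

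To upgrade this from $\mathfrak{m}$ to all of $\mathfrak{g}$, decompose $Y = Y_{\mathfrak{h}} + Y_{\mathfrak{m}}$. The $Y_{\mathfrak{m}}$-part is handled by the preceding paragraph. For the $\mathfrak{h}$-part, $\operatorname{Ad}(H)$-invariance of $\mathfrak{m}$ yields $[Y_{\mathfrak{h}},Z]\in\mathfrak{m}$, and $G$-invariance of $\tilde X$ forces $X$ to be annihilated by $\operatorname{ad}(\mathfrak{h})$; the $\operatorname{Ad}(H)$-invariance of $\langle,\rangle$ then gives $\langle [Y_{\mathfrak{h}},Z],X\rangle = -\langle Z,[Y_{\mathfrak{h}},X]\rangle = 0$. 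This proves~(\ref{keyclaim}) and the corollary follows. The main obstacle in the plan is the first step---extracting $\nabla\tilde X = 0$ from the Douglas hypothesis on the singular metric $F$---which is the only place the hypothesis is genuinely used; once that ingredient is available, the rest is a short symmetry manipulation of the Nomizu formula together with $\operatorname{Ad}(H)$-invariance.
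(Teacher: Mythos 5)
Your overall reduction coincides with the paper's: by Proposition \ref{geodesic vector of Kropina} everything comes down to showing that the Douglas hypothesis forces $\left\langle [Y,Z]_{\mathfrak{m}},X\right\rangle =0$, after which the criterion collapses to the Kowalski--Vanhecke condition for $\left\langle ,\right\rangle$. The gap is in the one substantive step: your claim that a Kropina metric is of Douglas type if and only if $\tilde{X}$ is parallel with respect to the Levi--Civita connection of $\left\langle ,\right\rangle$. That is not the known characterisation, and the direction you need (Douglas $\Rightarrow$ parallel) is false. Parallelism of $\tilde{X}$ makes $F$ a Berwald metric, which is strictly stronger than Douglas; for Kropina metrics the Douglas condition constrains only the skew-symmetric part of the covariant derivative of $\beta$ and is far weaker than $\nabla\beta =0$. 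In the invariant homogeneous setting the correct statement is precisely Theorem 3.2 of \cite{An-Deng Monatsh}, which the paper's proof cites: $F$ is of Douglas type if and only if $X$ is orthogonal to $[\mathfrak{m},\mathfrak{m}]_{\mathfrak{m}}$ (equivalently, the invariant $1$-form $\beta$ is closed). There are plenty of homogeneous Kropina metrics of Douglas type whose vector field is not parallel --- e.g.\ a left invariant metric on a non-abelian Lie group with $X$ orthogonal to the derived algebra but $\nabla \tilde{X}\neq 0$ --- and for these your argument proves nothing, so as written the corollary is not established.

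Note, however, that what you actually extract from parallelism via the Nomizu formula --- antisymmetrising in $U,W$ to obtain $\left\langle [\mathfrak{m},\mathfrak{m}]_{\mathfrak{m}},X\right\rangle =0$ --- is exactly the An--Deng orthogonality condition. So if you replace your first step by an appeal to that theorem (or an independent proof of it), the rest of your argument goes through and matches the paper's one-line proof; indeed your treatment of the $\mathfrak{h}$-component of $Y$ via $Ad(H)$-invariance of $X$ and of $\left\langle ,\right\rangle$ is more explicit than the paper, which passes from the orthogonality of $X$ to $[\mathfrak{m},\mathfrak{m}]_{\mathfrak{m}}$ directly to the vanishing of $\left\langle [Y,Z]_{\mathfrak{m}},X\right\rangle$ for all $Y\in\mathfrak{g}$ without comment.
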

\begin{proof}
According to the theorem 3.2 of \cite{An-Deng Monatsh}, $F$ is of Douglas type if and only  if $X$ is orthogonal to $[\mathfrak{m},\mathfrak{m}]_{\mathfrak{m}}$. Therefore we have
\begin{equation*}
 \left\langle [Y,Z]_{\mathfrak{m}},\frac{2}{F(Y_{\mathfrak{m}})}Y_{\mathfrak{m}}-X\right\rangle =\left\langle [Y,Z]_{\mathfrak{m}},\frac{2}{F(Y_{\mathfrak{m}})}Y_{\mathfrak{m}}\right\rangle.
\end{equation*}
The above equation completes the proof.
\end{proof}
In \cite{Yan-Deng}, it is shown that on any homogeneous Randers space $(G/H,F)$, there is at least one homogeneous geodesic issuing from each origin point. In a similar way we show that the same result is true for any homogeneous Kropina space.
\begin{prop}
Suppose that $(M,F)$ is a homogeneous manifold with a Kropina metric $F$. Then there is at least one homogeneous geodesic issuing from each origin point.
\end{prop}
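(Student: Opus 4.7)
My plan is to use the criterion of Proposition \ref{geodesic vector of Kropina} and adapt the topological strategy of Yan and Deng \cite{Yan-Deng} for homogeneous Randers spaces to the singular Kropina setting. By that proposition, it suffices to exhibit a nonzero $Y\in\mathfrak{g}$ with
\begin{equation*}
\bigl\langle [Y,Z]_{\mathfrak{m}},\ 2\langle Y_{\mathfrak{m}},X\rangle\, Y_{\mathfrak{m}} - \langle Y_{\mathfrak{m}},Y_{\mathfrak{m}}\rangle\, X\bigr\rangle = 0,\qquad \forall\, Z\in\mathfrak{m},
\end{equation*}
which is the content of Proposition \ref{geodesic vector of Kropina} after clearing the positive factor $F(Y_{\mathfrak{m}})/(2\langle Y_{\mathfrak{m}},X\rangle)$. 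Both sides are positively homogeneous of degree $3$ in $Y$, so solutions may be sought on any compact section of the $\mathbb{R}^{>0}$-action on $\mathfrak{g}\setminus\{0\}$.

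I would fix an $\mathrm{Ad}(H)$-invariant inner product $\langle,\rangle_{\mathfrak{g}}$ on $\mathfrak{g}$ that restricts to $\langle,\rangle$ on $\mathfrak{m}$ with $\mathfrak{m}\perp\mathfrak{h}$, and let $\mathcal{S}\subset\mathfrak{g}$ be the associated unit sphere. On the open subset $\mathcal{S}^{+}=\{Y\in\mathcal{S}:\langle Y_{\mathfrak{m}},X\rangle>0\}$ the quantity $F(Y_{\mathfrak{m}})$ is defined and positive. Following the spirit of \cite{Yan-Deng}, the aim is then to construct on $\mathcal{S}^{+}$ either an $\mathrm{Ad}(H)$-invariant energy functional whose interior critical points satisfy the displayed condition, or an $\mathrm{Ad}(H)$-equivariant continuous map from $\mathcal{S}^{+}$ into $\mathfrak{m}$ whose zeros produce geodesic vectors, and to apply a variational or degree argument to conclude existence.

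The main obstacle is the non-compactness of $\mathcal{S}^{+}$. Its relative boundary in $\mathcal{S}$ consists of those $Y$ with $\langle Y_{\mathfrak{m}},X\rangle=0$, which is exactly where the Kropina data become singular. Since $F(Y_{\mathfrak{m}})\to +\infty$ whenever $\langle Y_{\mathfrak{m}},X\rangle\to 0^{+}$ with $Y_{\mathfrak{m}}$ bounded away from zero, a minimizing sequence for a suitably chosen functional cannot escape to that portion of the boundary; the remaining degenerate direction $Y_{\mathfrak{m}}\to 0$ (that is, $Y$ converging into $\mathfrak{h}$) has to be ruled out by a rescaling argument or by adding a penalty term. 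Once a critical point $Y^{*}\in\mathcal{S}^{+}$ is secured, the $\mathrm{Ad}(H)$-invariance of $\langle,\rangle$ and of the vector $X$ together with the resulting Lagrange-multiplier equation should translate into the orthogonality condition of Proposition \ref{geodesic vector of Kropina}, giving the required homogeneous geodesic through the origin.
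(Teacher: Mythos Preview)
Your outline has two genuine gaps that prevent it from closing. First, you never specify the functional (or the equivariant map) on $\mathcal{S}^{+}$, and you do not verify that its critical points satisfy the condition of Proposition~\ref{geodesic vector of Kropina}. That condition requires $\bigl\langle[Y,Z]_{\mathfrak{m}},\,2\langle Y_{\mathfrak{m}},X\rangle Y_{\mathfrak{m}}-\langle Y_{\mathfrak{m}},Y_{\mathfrak{m}}\rangle X\bigr\rangle=0$ for \emph{every} $Z\in\mathfrak{m}$; it constrains the full image of $\mathrm{ad}(Y)|_{\mathfrak{m}}$, not merely the gradient of a scalar function of $Y$. The obvious candidate, $Y\mapsto F(Y_{\mathfrak{m}})$ restricted to the sphere, yields at a critical point a relation between $g_{Y_{\mathfrak{m}}}(\,\cdot\,,Y_{\mathfrak{m}})$ and $\langle\,\cdot\,,Y\rangle_{\mathfrak{g}}$, which is not the bracket condition. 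In the regular case of \cite{Yan-Huang} this step is mediated by the Legendre transform and its bijectivity; for a singular Kropina metric that map is not a bijection, and you offer no substitute. Second, you explicitly acknowledge but do not resolve the loss of compactness as $Y_{\mathfrak{m}}\to 0$ (i.e.\ $Y\to\mathfrak{h}$); a ``rescaling argument or penalty term'' is promised but never carried out, so nothing prevents a minimizing sequence from degenerating.

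The paper sidesteps both issues by an explicit algebraic construction in the style of \cite{Kowalski-Szenthe} and \cite{Yan-Deng}. One takes $\mathfrak{m}=\mathfrak{h}^{\perp}$ with respect to the Killing form $\textbf{\textsf{K}}$ and splits into two cases. If $\mathrm{rad}\,\textbf{\textsf{K}}=\mathfrak{m}$, then $[\mathfrak{g},\mathfrak{g}]_{\mathfrak{m}}\subsetneq\mathfrak{m}$, and for any unit $Y\in[\mathfrak{g},\mathfrak{g}]_{\mathfrak{m}}^{\perp}$ the vector $W=\tfrac{1}{2}\bigl(\sqrt{\langle X,X\rangle}\,Y+X\bigr)$ satisfies $F(W)=1$ and $\tfrac{2}{F(W)}W-X=\sqrt{\langle X,X\rangle}\,Y$, so \eqref{equivalent of geodesic lemma} holds trivially. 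If $\mathrm{rad}\,\textbf{\textsf{K}}\subsetneq\mathfrak{m}$, one diagonalizes the $\textbf{\textsf{K}}$-symmetric endomorphism $\theta$ of $\mathfrak{m}$ defined by $\textbf{\textsf{K}}(U,V)=\langle\theta(U),V\rangle$, writes $X=X_{0}+\sum x_{i}f_{i}$ in the eigenbasis, and seeks $Y=X_{0}+\sum y_{i}f_{i}$ with $y_{i}=x_{i}/(1-t\lambda_{i})$; the associativity of $\textbf{\textsf{K}}$ reduces \eqref{equivalent of geodesic lemma} to the single scalar equation $F(Y(t))=2$, which is then solved by the intermediate value theorem on $t\in(-1/|\lambda_{1}|,1/|\lambda_{1}|)$. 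No variational or degree argument is needed.
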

\begin{proof}
Let $G$ be a connected Lie group of isometries acting transitively on $M$, and $H$ be the isotropy group of a point $o \in M$. Suppose that $\textbf{\textsf{K}}$ and $\textsf{rad}\textbf{\textsf{K}}$ denote the Killing form and it's null space, respectively. We recall that the null space $\textsf{rad}\textbf{\textsf{K}}$ is a solvable ideal of $\mathfrak{g}$, so it is included in the radical of $\mathfrak{g}$ denoted by $\mathfrak{r}$ (see \cite{Humphreys}).\\
Let $\mathfrak{m}=\mathfrak{h}^{\bot}$. Then, since $\textbf{\textsf{K}}$ is nondegenerate on $\mathfrak{h}$, $\mathfrak{g}=\mathfrak{h}+\mathfrak{m}$ is a reductive decomposition and $\textsf{rad}\textbf{\textsf{K}} \subseteq \mathfrak{m}$ (see \cite{Kowalski-Szenthe}). Assume that $F$ is defined by a $G$-invariant Riemannian metric $\left\langle ,\right\rangle $ and a vector field $\tilde{X}$ such that $X=\tilde{X}(H)$. Also, we denote the corresponding scalar product on $\mathfrak{m}$  and the  $Ad(H)$-invariant vector on $\mathfrak{m}$  by $\left\langle ,\right\rangle $ and $X$, respectively.\\
There are two cases:
\begin{description}
  \item[Case 1 ($\textsf{rad}\textbf{\textsf{K}}=\mathfrak{m}$)] If $\textsf{rad}\textbf{\textsf{K}}=\mathfrak{m}$ then there is a solvable Lie group $L$ which acts transitively on $M$. Using the same argument as in the case $(A)$ of the proof of proposition $3$ in \cite{Kowalski-Szenthe} shows that there exists a reductive decomposition $\mathfrak{g}=\mathfrak{h}+\mathfrak{m}$ such that the $\mathfrak{m}$-projection $[\mathfrak{g}, \mathfrak{g}]_{\mathfrak{m}}$ is a proper subspace of $\mathfrak{m}$. Let $Y$ belongs to
      $ [\mathfrak{g}, \mathfrak{g}]_{\mathfrak{m}}^{\perp} $, with respect to $\left\langle ,\right\rangle $, such that $\left\langle Y,Y\right\rangle =1$. Thus, $W=\dfrac{1}{2}(\sqrt{\left\langle X,X\right\rangle }Y+X)$ is a geodesic vector. An easy computation shows that $F(W)=1$ and so for any $Z\in \mathfrak{m}$ we have $\left\langle [W,Z]_{\mathfrak{m}},\dfrac{2}{F(W_{\mathfrak{m}})}W_{\mathfrak{m}}-X\right\rangle =\left\langle [W,Z]_{\mathfrak{m}},\sqrt{\left\langle X,X\right\rangle }Y+X-X\right\rangle =0$. Therefore there exists one homogeneous geodesic through $o$.
\item[Case 2 ($\textsf{rad}\textbf{\textsf{K}}\subsetneq\mathfrak{m}$)] If $\textsf{rad}\textbf{\textsf{K}}$ is a proper subspace of $\mathfrak{m}$, the same argument as in the case $(2)$ of the proof of theorem $2.9$ in \cite{Yan-Deng} shows that $\mathfrak{m}$ has an eigenspace decomposition $\mathfrak{m}=V_0+V_1+...+V_s$ with respect to a $\textbf{\textsf{K}}$-symmetric endomorphism $\theta:\mathfrak{m} \longrightarrow \mathfrak{m}$ defined by $\textbf{\textsf{K}}(X,Y)=\left\langle \theta(X),Y\right\rangle $ such that $V_0=\textsf{rad}\textbf{\textsf{K}}$. Let $\{f_1,f_2,...,f_r\}$ be a $\left\langle ,\right\rangle $-orthonormal basis of $V=V_1+V_2+...+V_s$ such that $\theta(f_i)=\lambda _if_i$ for $i=1,2,...,r$. Suppose that $X=X_0+\Sigma_{i=1}^rx_if_i$ and $Y=Y_0+\Sigma_{i=1}^ry_if_i$, where $X_0,Y_0\in V_0$ and $x_i,y_i \in \mathbb{R}$. By proposition \ref{geodesic vector of Kropina}, $Y$ is a geodesic vector of $(M,F)$ if and only if the following equation equals zero,
\begin{eqnarray*}
    &&\left\langle [Y,Z]_{\mathfrak{m}},\frac{2}{F(Y_{\mathfrak{m}})}Y_{\mathfrak{m}}-X\right\rangle  \\
    && \hspace*{1cm}= \left\langle [Y,Z]_{\mathfrak{m}},\frac{2}{F(Y_{\mathfrak{m}})}(Y_0+\Sigma_{i=1}^ry_if_i)-X_0-\Sigma_{i=1}^rx_if_i\right\rangle\\
    && \hspace*{1cm}= \left\langle [Y,Z]_{\mathfrak{m}},\frac{2}{F(Y_{\mathfrak{m}})}Y_0-X_0\right\rangle +\left\langle [Y,Z]_{\mathfrak{m}},\frac{2}{F(Y_{\mathfrak{m}})}\Sigma_{i=1}^ry_if_i- \Sigma_{i=1}^rx_if_i\right\rangle\\
    && \hspace*{1cm}= \left\langle [Y,Z]_{\mathfrak{m}},\frac{2}{F(Y_{\mathfrak{m}})}Y_0-X_0\right\rangle +\left\langle [Y,Z]_{\mathfrak{m}},\frac{2}{F(Y_{\mathfrak{m}})}\Sigma_{i=1}^ry_i\frac{\theta(f_i)}{\lambda _i}- \Sigma_{i=1}^rx_i\frac{\theta(f_i)}{\lambda _i}\right\rangle \\
    && \hspace*{1cm}= \left\langle [Y,Z]_{\mathfrak{m}},\frac{2}{F(Y_{\mathfrak{m}})}Y_0-X_0\right\rangle +\textbf{\textsf{K}}\left([Y,Z]_{\mathfrak{m}},\frac{2}{F(Y_{\mathfrak{m}})}\Sigma_{i=1}^ry_i\frac{f_i}{\lambda _i}- \Sigma_{i=1}^rx_i\frac{f_i}{\lambda _i}\right)
\end{eqnarray*}
\begin{eqnarray*}
    && \hspace*{1cm}= \left\langle [Y,Z]_{\mathfrak{m}},\frac{2}{F(Y_{\mathfrak{m}})}Y_0-X_0\right\rangle +\textbf{\textsf{K}}\left(  [Y,Z],\frac{2}{F(Y_{\mathfrak{m}})}\Sigma_{i=1}^ry_i\frac{f_i}{\lambda _i}- \Sigma_{i=1}^rx_i\frac{f_i}{\lambda _i}\right)  \\
    && \hspace*{1cm}= \left\langle [Y,Z]_{\mathfrak{m}},\frac{2}{F(Y_{\mathfrak{m}})}Y_0-X_0\right\rangle +\textbf{\textsf{K}}\left(  Z,\left[ \frac{2}{F(Y_{\mathfrak{m}})}\Sigma_{i=1}^ry_i\frac{f_i}{\lambda _i}- \Sigma_{i=1}^rx_i\frac{f_i}{\lambda _i},Y\right] \right)  \\
    && \hspace*{1cm}= \left\langle [Y,Z]_{\mathfrak{m}},\frac{2}{F(Y_{\mathfrak{m}})}Y_0-X_0\right\rangle +\textbf{\textsf{K}}\left(  Z,\left[ \frac{2}{F(Y_{\mathfrak{m}})}\Sigma_{i=1}^ry_i\frac{f_i}{\lambda _i}- \Sigma_{i=1}^rx_i\frac{f_i}{\lambda _i},\Sigma_{i=1}^ry_if_i\right] \right).
\end{eqnarray*}

The above equation  equals zero if the following equations have a solution of the form
$(Y_0, y_1, y_2,\cdots, y_r,t)$:
\begin{equation}
 \left\{
\begin{array}{rl}
 & F(Y)=2,\\
& Y_0=X_0,\\
& \dfrac{y_i-x_i}{\lambda _i}=ty_i.
\end{array} \right.
\end{equation}
In the case $X=X_0$, we easily obtain the following solution:
\begin{equation*}
   Y_0=X_0, \qquad y_1=\sqrt{\left\langle X_0,X_0\right\rangle }, \qquad t=\dfrac{1}{\lambda _1}, \qquad y_i=0, i=2,...,r.
\end{equation*}
In the case $X\neq X_0$, without loss of generality, we can assume that $\vert \lambda _1\vert \geq \vert \lambda _2\vert \geq \cdots \geq \vert \lambda _r\vert >0$. Let $Y_0=X_0$, $y_i(t)=\dfrac{x_i}{1-t\lambda _i}$, $i=1,2,\cdots,r$, $Y(t)=X_0+\sum _{i=1}^ry_i(t)f_i$. We can easily see that $M(t)=F(Y(t))-2$ is a continues function on $(\frac{-1}{\vert \lambda _1\vert} , \frac{1}{\vert \lambda _1\vert})$ and $M(0)<0$ and $\lim _{t\rightarrow \frac{1}{\lambda _1}} M(t) =+\infty$. On the other hand for any $t\in(\frac{-1}{\vert \lambda _1\vert} , \frac{1}{\vert \lambda _1\vert})$ we have $F(Y(t))>0$. So the intermediate value theorem states that there exists $t=t_0$ such that $M(t_0)=0$ and $F(Y(t_0))>0$. We observe that $Y_0=X_0$,  $y_i=y_i(t_0)$ and $t=t_0$ is a solution and it completes the proof.
\end{description}
\end{proof}


\section{\textbf{Homogeneous Geodesics in Homogeneous $(\alpha,\beta)$-Spaces}}
In this section we investigate homogeneous geodesics on homogeneous Finsler spaces equipped with $(\alpha ,\beta)$-metrics and obtain interesting results.
\begin{prop}
Suppose that $(M=G/H,F)$ is a homogeneous Finslerian manifold and $F$ is an invariant $(\alpha , \beta )$-metric defined by an invariant Riemannian metric $\left\langle ,\right\rangle $ and an invariant vector field $\tilde{X}$ on $M$ such that $X=\tilde{X}(H)$ is a geodesic vector with respect to the Riemannian metric $\left\langle ,\right\rangle$. Then $(M,F)$ admits a homogeneous geodesic through any point.
\end{prop}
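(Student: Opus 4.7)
The plan is to prove that the vector $X = \tilde{X}(H)$, viewed as an element of $\mathfrak{m}$, is itself a geodesic vector of $F$; the desired homogeneous geodesic through any point then follows by $G$-homogeneity. Since $X$ is a geodesic vector of the Riemannian metric $\langle,\rangle$ by hypothesis, it is in particular nonzero, and by Latifi's criterion recalled in Section 2 it suffices to verify
\begin{equation*}
g_X([X,Y]_{\mathfrak{m}}, X) = 0 \qquad \text{for every } Y \in \mathfrak{m},
\end{equation*}
where $g$ is the fundamental tensor of $F$.

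The crux is to compute $g_X(u,X)$ for $u\in\mathfrak{m}$. The fundamental tensor of any $(\alpha,\beta)$-metric can be written in the standard form
\begin{equation*}
g_y(u,v) = \rho\langle u,v\rangle + \rho_0\,\beta(u)\beta(v) + \rho_1\alpha^{-1}\bigl(\beta(u)\langle v,y\rangle + \beta(v)\langle u,y\rangle\bigr) + \rho_2\alpha^{-2}\langle u,y\rangle\langle v,y\rangle,
\end{equation*}
with coefficients $\rho_i$ that are smooth functions of $s = \beta/\alpha$ determined by $\phi$. Specializing at $y = X$ one has $\alpha(X) = \sqrt{\langle X,X\rangle}$, $\beta(X) = \langle X,X\rangle$, and $\beta(w) = \langle X,w\rangle$ for every $w$. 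Setting $v = X$, every summand on the right-hand side collapses into a scalar multiple of $\langle X, u\rangle$, so that
\begin{equation*}
g_X(u,X) = c(X)\,\langle X, u\rangle
\end{equation*}
for a scalar $c(X)$ depending only on $\phi$ and $\sqrt{\langle X,X\rangle}$.

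Putting $u = [X,Y]_{\mathfrak{m}}$ and invoking the Kowalski--Vanhecke criterion for Riemannian geodesic vectors recalled in Section 2, the hypothesis gives $\langle X, [X,Y]_{\mathfrak{m}}\rangle = 0$ for every $Y \in \mathfrak{m}$; hence $g_X([X,Y]_{\mathfrak{m}}, X) = 0$ and Latifi's corollary yields that $X$ is a geodesic vector of $F$. The main obstacle is the algebraic specialization reducing $g_X(\cdot,X)$ to a multiple of $\langle X,\cdot\rangle$, which rests on the coincidence $\beta(\cdot) = \langle X,\cdot\rangle$ together with $\beta(X) = \langle X,X\rangle$ at $y = X$; once these identifications are in place, the argument is merely a transfer of the Riemannian geodesic condition to the Finslerian one.
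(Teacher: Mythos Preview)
Your proof is correct and follows the same approach as the paper: both show that $X$ itself is a geodesic vector of $(M,F)$. The only difference is that the paper cites Theorem~2.2 of \cite{Salimi-Parhizkar} for this implication, whereas you supply a direct computation of $g_X(\cdot,X)$ via the fundamental-tensor formula for $(\alpha,\beta)$-metrics; your observation that at $y=X$ one has $\beta(\cdot)=\langle X,\cdot\rangle$ and hence $g_X(u,X)=c(X)\langle X,u\rangle$ is exactly the content of that cited result, so this is an unpacking rather than a new route.
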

\begin{proof}
$X$ is a geodesic vector of the Riemannian manifold $(M,\left\langle ,\right\rangle)$. So, by theorem 2.2 of \cite{Salimi-Parhizkar}, $X$ is a geodesic vector of $(M,F)$. Hence at least one homogeneous geodesic through $o$ exists.
\end{proof}

The next proposition develops the main result of previous section to $(\alpha ,\beta)$-metrics with some conditions. We use some ideas from \cite{Kowalski-Szenthe} in the proof.

\begin{prop}
Let $(M=G/H,F)$ be a homogeneous Finslerian manifold. Suppose that $F$ is an invariant $(\alpha , \beta )$-metric, arisen from an invariant Riemannian metric $\left\langle ,\right\rangle $ and an invariant vector field $\tilde{X}$ on $M$. If  for any $Y\in \mathfrak{g}\setminus \{0\} $ and $Z \in \mathfrak{m}$ the equality $\left\langle X,[Y,Z]_{\mathfrak{m}}\right\rangle =0$ holds and moreover $\varphi'' (r_{\mathfrak{m}}) \leq 0$, where $r_{\mathfrak{m}}=\frac{\left\langle X,Y_{\mathfrak{m}}\right\rangle }{\sqrt{\left\langle Y_{\mathfrak{m}}, Y_{\mathfrak{m}}\right\rangle }}$, then either at least one homogeneous geodesic through any $o \in M$ exists or $M=G/H$ such that $G$ is a semi-simple group.
\end{prop}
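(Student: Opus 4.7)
The plan is to adapt the two-step strategy from the previous Kropina proposition. First, I apply Latifi's criterion together with the explicit fundamental-tensor formula for an $(\alpha,\beta)$-metric $F=\alpha\phi(\beta/\alpha)$, which has the general shape
\[
g_y(u,v) = P\langle u,v\rangle + R\langle X,u\rangle\langle X,v\rangle + \frac{T}{\alpha}\bigl(\langle y,u\rangle\langle X,v\rangle + \langle y,v\rangle\langle X,u\rangle\bigr) + \frac{U}{\alpha^2}\langle y,u\rangle\langle y,v\rangle,
\]
with $P=\phi(\phi-s\phi')$, $R=(\phi')^2+\phi\phi''$, $T=\phi\phi'-sR$, and $U=-sT$, all evaluated at $s=r_{\mathfrak{m}}$ and $\alpha=\sqrt{\langle Y_{\mathfrak{m}},Y_{\mathfrak{m}}\rangle}$. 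Substituting $u=[Y,Z]_{\mathfrak{m}}$ and $v=y=Y_{\mathfrak{m}}$, the hypothesis $\langle X,[Y,Z]_{\mathfrak{m}}\rangle=0$ kills the $R$-term and half of the cross term. Using $\langle X,y\rangle=s\alpha$, $\langle y,y\rangle=\alpha^2$, and the algebraic identity $sT+U=0$, the surviving expression collapses to
\[
g_{Y_{\mathfrak{m}}}([Y,Z]_{\mathfrak{m}},Y_{\mathfrak{m}}) = \phi(r_{\mathfrak{m}})\bigl(\phi(r_{\mathfrak{m}})-r_{\mathfrak{m}}\phi'(r_{\mathfrak{m}})\bigr)\langle[Y,Z]_{\mathfrak{m}},Y_{\mathfrak{m}}\rangle.
\]

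Next I would verify that the scalar $\phi(\phi-s\phi')$ is strictly positive at $s=r_{\mathfrak{m}}$. By Cauchy--Schwarz, $r_{\mathfrak{m}}^2\leq b^2$, so the regularity inequality $\phi-s\phi'+(b^2-s^2)\phi''>0$ applies at $r_{\mathfrak{m}}$; combined with $\phi''(r_{\mathfrak{m}})\leq 0$ and $b^2-s^2\geq 0$, this forces $\phi-s\phi'>0$, and since $\phi>0$ the whole coefficient is positive. Consequently Latifi's criterion becomes equivalent to $\langle[Y,Z]_{\mathfrak{m}},Y_{\mathfrak{m}}\rangle=0$ for every $Z\in\mathfrak{m}$, that is, to the Kowalski--Vanhecke criterion for $Y$ to be a geodesic vector of the Riemannian homogeneous space $(M,\langle,\rangle)$. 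The Finslerian existence problem is thereby reduced to a purely Riemannian one.

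Finally I would replay the Kowalski--Szenthe analysis used in the proof of the previous proposition. Decompose $\mathfrak{g}=\mathfrak{h}+\mathfrak{m}$ relative to the Killing form $\textbf{\textsf{K}}$, with $\textsf{rad}\textbf{\textsf{K}}\subseteq\mathfrak{m}$. If $\textsf{rad}\textbf{\textsf{K}}=\mathfrak{m}$ (the solvable case), any nonzero $Y$ in the $\langle,\rangle$-orthogonal complement of $[\mathfrak{g},\mathfrak{g}]_{\mathfrak{m}}$ satisfies the Riemannian condition automatically. If $\textsf{rad}\textbf{\textsf{K}}$ is a nonzero proper subspace of $\mathfrak{m}$, the $\textbf{\textsf{K}}$-symmetric eigenspace construction on $\theta\colon\mathfrak{m}\to\mathfrak{m}$ produces a Riemannian geodesic vector by the same recipe as before (now with no auxiliary intermediate-value step since the problem is already Riemannian). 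The only remaining possibility, $\textsf{rad}\textbf{\textsf{K}}=0$, is by definition $G$ semi-simple, which is the alternative in the conclusion. The main obstacle is the tensor-algebra collapse in the first step: it depends crucially on the identity $sT+U=0$ and on the strict positivity of $P$, and the concavity assumption $\phi''(r_{\mathfrak{m}})\leq 0$ enters precisely to secure this positivity via the regularity inequality; without it, the reduction to the Kowalski--Vanhecke condition would fail and the Kowalski--Szenthe transfer would be unavailable.
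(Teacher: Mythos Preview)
Your reduction step is correct and in fact more explicit than the paper's own argument: under the hypothesis $\langle X,[Y,Z]_{\mathfrak{m}}\rangle=0$ you collapse the fundamental tensor to $g_{Y_{\mathfrak{m}}}([Y,Z]_{\mathfrak{m}},Y_{\mathfrak{m}}) = \phi(r_{\mathfrak{m}})\bigl(\phi(r_{\mathfrak{m}})-r_{\mathfrak{m}}\phi'(r_{\mathfrak{m}})\bigr)\langle[Y,Z]_{\mathfrak{m}},Y_{\mathfrak{m}}\rangle$, and your positivity argument for the scalar coefficient via the regularity inequality together with $\phi''(r_{\mathfrak{m}})\le 0$ is sound. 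The paper does not carry out this computation; it simply invokes Theorem~2.3 of \cite{Salimi-Parhizkar}, which is precisely the statement that, under the two hypotheses, a Riemannian geodesic vector for $\langle\,,\,\rangle$ is a geodesic vector for $F$. So your first step amounts to a self-contained proof of the lemma the paper cites.

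The second step is organized differently. The paper appeals directly to the derived-series dichotomy of Proposition~3 in \cite{Kowalski-Szenthe}: either some $\mathfrak{g}^{(i)}$ still acts transitively while $\mathfrak{g}^{(i+1)}$ does not, in which case part~(A) of that proof yields a Riemannian geodesic vector that then transfers to $F$; or every $\mathfrak{g}^{(i)}$ acts transitively, in which case $G$ is semi-simple. You instead recycle the Killing-radical split from the Kropina proposition. Your cases $\textsf{rad}\,\textbf{\textsf{K}}=\mathfrak{m}$ and $\textsf{rad}\,\textbf{\textsf{K}}=0$ are fine. The middle case (nonzero proper radical) is where your justification is too loose: ``the same recipe as before'' points to the Kropina-specific system solved by an intermediate-value argument, which does not literally carry over. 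What actually works here is simpler, since the problem is now Riemannian: any eigenvector $f_i$ of $\theta$ with $\lambda_i\neq 0$ satisfies
\[
\langle[f_i,Z]_{\mathfrak{m}},f_i\rangle=\lambda_i^{-1}\textbf{\textsf{K}}([f_i,Z]_{\mathfrak{m}},f_i)=\lambda_i^{-1}\textbf{\textsf{K}}([f_i,Z],f_i)=\lambda_i^{-1}\textbf{\textsf{K}}(Z,[f_i,f_i])=0,
\]
so it is already a Riemannian (hence $F$-) geodesic vector. With this one-line replacement your trichotomy goes through; alternatively you could, as the paper does, invoke the derived-series dichotomy of \cite{Kowalski-Szenthe} directly, which matches the stated alternative ``or $G$ is semi-simple'' on the nose.
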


\begin{proof}
The main idea is similar to that used in the proof of proposition 3 of \cite{Kowalski-Szenthe}. Assume that $G/H$ is an arbitrary representation of $M$, where $G$ is a transitive Lie group of isometries. If we put $o=H$ then there are two cases:
\begin{enumerate}
  \item There exists $i\geq 0$ such that $T_e \pi (\mathfrak{g}^{(i)})=T_oM$ and $T_e \pi (\mathfrak{g}^{(i+1)}) \subsetneq T_oM$,
  \item For every $i \geq 0$, $T_e \pi (\mathfrak{g}^{(i)})=T_oM$.
\end{enumerate}
In the case $(1)$, by attention to the part $(A)$ of the proof of proposition $3$ in \cite{Kowalski-Szenthe}, there exists a geodesic vector $Y\neq 0$ for the Riemannian manifold $(M,\left\langle ,\right\rangle)$. Now, theorem $2.3$ of \cite{Salimi-Parhizkar} shows that $Y$ is a geodesic vector of $(M,F)$. Thus, there exists at least one homogeneous geodesic through $o$. In the case $(2)$, by attention to the part $(B)$ of the proof of proposition $3$ in \cite{Kowalski-Szenthe}, the Lie group $G$ is semi-simple.
\end{proof}

Next, we discuss the existence of homogeneous geodesic through $o$ on $(M=G/H, F)$ where $G$ is a semi-simple group of isometries and $F$ is an invariant $(\alpha , \beta)$-metric induced by an invariant Riemannian metric $\left\langle ,\right\rangle$. For this, we consider the Killing form of $\mathfrak{g}$, denoted by $\textbf{\textsf{K}}$. We observe that $\textbf{\textsf{K}}$ is a nondegenerate form on $\mathfrak{g}$ and moreover, its restriction  to $\mathfrak{h}$ is nondegenerate. If $\mathfrak{g}=\mathfrak{m}+\mathfrak{h}$ is the reductive decomposition such that $\frak{m}$ is the orthogonal complement of $\frak{h}$ with respect to $\textbf{\textsf{K}}$, then $\textbf{\textsf{K}}$ is nondegenerate on $\mathfrak{m}$, too. Let $\left\langle  , \right\rangle $ be the corresponding scalar product on $\mathfrak{m}$ induced by the Riemannian metric $\left\langle ,\right\rangle$. Now we define an isomorphism $\theta$ of $\mathfrak{m}$  onto $\mathfrak{m}$ as follows:\\
\begin{equation}
\left\langle \theta (X),Y\right\rangle =\textbf{\textsf{K}}(X,Y), \qquad X,Y \in \mathfrak{m}.
\end{equation}
It is easy to show that the corresponding matrix of $\theta$ and the corresponding matrix of $\textbf{\textsf{K}}$ are equal with respect to a $\left\langle  ,\right\rangle$-orthonormal basis of $\mathfrak{m}$. So $\mathfrak{m}$ admits an $\left\langle  ,\right\rangle $-orthonormal basis $(f_1,\cdots,f_m)$ of  eigenvectors (for more details see \cite{Kowalski-Szenthe} ).

\begin{prop}
Suppose that $(M=G/H,F)$ is a homogeneous Finslerian manifold, dim$M=m$ and $G$ is a semi-simple group of isometries. Let  $F$ be a $(\alpha , \beta)$-metric which is defined by an invariant Riemannian metric $\left\langle ,\right\rangle $ and an invariant vector field $\tilde{X}$ on $M$. If  for any $Y\in \mathfrak{g}\setminus \{0\}$ and $Z \in \mathfrak{m}$ the equality $\left\langle X,[Y,Z]_{\mathfrak{m}}\right\rangle =0$ holds and moreover $\varphi'' (r_{\mathfrak{m}}) \leq 0$, where $r_{\mathfrak{m}}=\frac{\left\langle X,Y_{\mathfrak{m}}\right\rangle }{\sqrt{\left\langle Y_{\mathfrak{m}}, Y_{\mathfrak{m}}\right\rangle }}$, then $m$ mutually orthogonal homogeneous geodesics issuing from any origin exist.
\end{prop}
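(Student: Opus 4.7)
The plan is to mimic the semi-simple case of the Kowalski--Szenthe theorem to produce $m$ mutually $\langle,\rangle$-orthogonal Riemannian geodesic vectors in $\mathfrak{m}$, and then upgrade each of them to a geodesic vector of the $(\alpha,\beta)$-metric $F$ via the same transfer result that powered the previous proposition. Using the $\textbf{\textsf{K}}$-orthogonal reductive decomposition $\mathfrak{g}=\mathfrak{h}+\mathfrak{m}$ and the $\langle,\rangle$-symmetric endomorphism $\theta\colon\mathfrak{m}\to\mathfrak{m}$ introduced just before the statement, I would fix a $\langle,\rangle$-orthonormal eigenbasis $(f_1,\dots,f_m)$ of $\mathfrak{m}$ with $\theta(f_i)=\lambda_i f_i$; since $G$ is semi-simple, $\textbf{\textsf{K}}$ is nondegenerate on $\mathfrak{g}$ and hence on $\mathfrak{m}$, so every $\lambda_i\neq 0$.

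The first main step is to check that each $f_i$ is a geodesic vector of the Riemannian manifold $(M,\langle,\rangle)$. By the Kowalski--Vanhecke criterion this amounts to $\langle[f_i,Z]_{\mathfrak{m}},f_i\rangle=0$ for every $Z\in\mathfrak{m}$. Using the symmetry of $\theta$, the $\textbf{\textsf{K}}$-orthogonality of $\mathfrak{m}$ and $\mathfrak{h}$, and the $\text{ad}$-invariance of the Killing form, this reduces to $\lambda_i\langle[f_i,Z]_{\mathfrak{m}},f_i\rangle = \textbf{\textsf{K}}([f_i,Z]_{\mathfrak{m}},f_i) = \textbf{\textsf{K}}([f_i,Z],f_i) = -\textbf{\textsf{K}}(Z,[f_i,f_i]) = 0$, and dividing by the nonzero eigenvalue $\lambda_i$ finishes the verification.

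The second step is to conclude by invoking Theorem $2.3$ of \cite{Salimi-Parhizkar}, the transfer result already cited in the preceding proposition: under the standing hypotheses $\langle X,[Y,Z]_{\mathfrak{m}}\rangle=0$ and $\varphi''(r_{\mathfrak{m}})\leq 0$, every geodesic vector of $(M,\langle,\rangle)$ is also a geodesic vector of $(M,F)$. Applying this to $Y=f_i$ for each $i=1,\dots,m$ produces $m$ homogeneous geodesics of $(M,F)$ through $o$, mutually $\langle,\rangle$-orthogonal by construction of the basis. The only genuinely delicate point is the Riemannian reduction in the previous paragraph, which hinges on $\lambda_i\neq 0$; this is precisely where semi-simplicity enters, through the nondegeneracy of $\textbf{\textsf{K}}$ on $\mathfrak{m}$. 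Beyond that, the argument is a direct concatenation of ingredients already assembled in the paper, so I do not foresee any substantive obstacle.
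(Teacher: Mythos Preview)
Your proposal is correct and follows essentially the same route as the paper: the paper's proof simply says that, as in the proof of Theorem~1 of \cite{Kowalski-Szenthe}, each eigenvector $f_i$ is a geodesic vector of $(M,\langle,\rangle)$, and then invokes Theorem~2.3 of \cite{Salimi-Parhizkar} to pass to $(M,F)$. You have merely unpacked the Kowalski--Szenthe step explicitly (the $\lambda_i\neq 0$ eigenvector computation via the ad-invariance of $\textbf{\textsf{K}}$), which is exactly what that citation encodes.
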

\begin{proof}
Likewise the proof of theorem 1 in \cite{Kowalski-Szenthe}, we can show that each eigenvector $f_i$ is a geodesic vector of $(M,\left\langle ,\right\rangle )$. So according to the theorem 2.3 in \cite{Salimi-Parhizkar}, it is a geodesic vector of $(M,F)$ and this completes the proof.
\end{proof}

\begin{prop}
Let $(M,F)$ be as in the previous proposition and $\rho\subseteq \mathfrak{g}$ satisfies the following conditions
\begin{itemize}
\item[a)] $Ad(H)(\rho) \subseteq \rho$,
\item[b)] $\rho$ is irreducible with respect to the restriction of adjoint representation to $H$,
\item[c)] $\textbf{\textsf{K}}$ is nondegenerate on $\rho$, and $\rho$ is $\textbf{\textsf{K}}$-orthogonal to $\mathfrak{h}$.
\end{itemize}
Then the nonzero elements of $\rho$ are geodesic vectors of $(M,F)$.
\end{prop}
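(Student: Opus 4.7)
The plan is to reduce to the Riemannian setting and then apply the same transfer principle used in the proof of the previous proposition. Since the hypotheses of that proposition are in force, theorem 2.3 of \cite{Salimi-Parhizkar} converts any geodesic vector of the Riemannian space $(M=G/H,\langle,\rangle)$ into a geodesic vector of $(M,F)$. It therefore suffices to verify the Kowalski--Vanhecke criterion
\begin{equation*}
\langle [Y,Z]_{\mathfrak{m}}, Y\rangle = 0 \qquad \text{for all } Z\in\mathfrak{m}
\end{equation*}
for every nonzero $Y\in\rho$.

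The heart of the argument is to show that $\rho$ lies inside a single eigenspace of the endomorphism $\theta:\mathfrak{m}\to\mathfrak{m}$ defined by $\langle\theta X,Y\rangle=\textbf{\textsf{K}}(X,Y)$. By condition (c), $\rho\subseteq\mathfrak{m}$. Recall that $\theta$ is $\langle,\rangle$-self-adjoint, $Ad(H)$-equivariant, and invertible, so $\mathfrak{m}$ admits a simultaneous $\langle,\rangle$- and $\textbf{\textsf{K}}$-orthogonal decomposition $V_{\lambda_1}\oplus\cdots\oplus V_{\lambda_s}$ into $\theta$-eigenspaces with $Ad(H)$-invariant summands and nonzero eigenvalues. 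Each $\langle,\rangle$-orthogonal projection $p_i:\mathfrak{m}\to V_{\lambda_i}$ is $Ad(H)$-equivariant, so by irreducibility of $\rho$ (condition (b)) and Schur's lemma, $p_i|_\rho$ is either zero or an $Ad(H)$-isomorphism onto its image. I would then invoke the expansion $\textbf{\textsf{K}}(X,Y)=\sum_i\lambda_i\langle p_iX,p_iY\rangle$ together with the nondegeneracy of $\textbf{\textsf{K}}|_\rho$ (condition (c)) to conclude that only one of the $p_i|_\rho$ can be nonzero. Consequently $\rho\subseteq V_{\lambda}$ for some $\lambda\neq 0$, and $\theta(Y)=\lambda Y$ for every $Y\in\rho$.

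The conclusion is then a short Killing-form computation. For $Y\in\rho\setminus\{0\}$ and $Z\in\mathfrak{m}$, self-adjointness of $\theta$ together with $\theta(Y)=\lambda Y$ gives
\begin{equation*}
\lambda\,\langle [Y,Z]_{\mathfrak{m}}, Y\rangle = \langle [Y,Z]_{\mathfrak{m}}, \theta(Y)\rangle = \textbf{\textsf{K}}([Y,Z]_{\mathfrak{m}}, Y) = \textbf{\textsf{K}}([Y,Z], Y) = 0,
\end{equation*}
where the third equality follows from $Y\in\mathfrak{m}$ being $\textbf{\textsf{K}}$-orthogonal to $\mathfrak{h}$, and the fourth from the $\mathrm{ad}$-invariance $\textbf{\textsf{K}}([Y,Z],Y)=-\textbf{\textsf{K}}(Z,[Y,Y])=0$. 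Since $\lambda\neq 0$, dividing yields the desired identity.

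The main obstacle is the claim in the second paragraph that $\rho$ is contained in a single $\theta$-eigenspace: a priori an irreducible $Ad(H)$-module could sit diagonally inside a sum of several eigenspaces, and ruling this out is the place where one must carefully combine the nondegeneracy of $\textbf{\textsf{K}}|_\rho$, the $\textbf{\textsf{K}}$-orthogonality of distinct eigenspaces, and the rigidity that Schur's lemma forces on the projections $p_i|_\rho$.
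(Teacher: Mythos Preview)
Your overall architecture matches the paper exactly: reduce to the Riemannian statement and then invoke theorem~2.3 of \cite{Salimi-Parhizkar} to transfer geodesic vectors from $(M,\langle,\rangle)$ to $(M,F)$. For the Riemannian step the paper does not argue directly; it simply cites Theorem~2 of \cite{Kowalski-Szenthe}. Your proposal replaces that citation with a from-scratch argument, and that is where the trouble lies.

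The gap you flag is real, and the sketch you give for closing it does not succeed. Knowing that each $p_i|_\rho$ is either zero or an isomorphism, together with the nondegeneracy of $\textbf{\textsf{K}}|_\rho$, does \emph{not} force only one $p_i|_\rho$ to be nonzero. Suppose two eigenspaces $V_{\lambda_1}$ and $V_{\lambda_2}$ (with $\lambda_1\neq\lambda_2$) each contain an $Ad(H)$-submodule isomorphic to $\rho$; then $\rho$ can embed diagonally, both projections are isomorphisms, and on $\rho$ one computes $\textbf{\textsf{K}}=(\lambda_1 c_1+\lambda_2 c_2)\langle,\rangle$ for suitable positive constants $c_i$, which is generically nondegenerate. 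So $\rho$ need not lie in a single $\theta$-eigenspace, and your chain
\[
\lambda\,\langle [Y,Z]_{\mathfrak m},Y\rangle=\langle [Y,Z]_{\mathfrak m},\theta(Y)\rangle
\]
breaks at its first equality. Put differently, what irreducibility buys you cheaply is only $\textbf{\textsf{K}}|_{\rho\times\rho}=\lambda\,\langle,\rangle|_{\rho\times\rho}$ (via the self-adjoint $Ad(H)$-map $P\theta P$ on $\rho$, $P$ the orthogonal projection), and that is not enough for your computation because $[Y,Z]_{\mathfrak m}$ need not lie in $\rho$.

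The cleanest fix is to do what the paper does and invoke Theorem~2 of \cite{Kowalski-Szenthe} for the Riemannian assertion; your transfer step then finishes the proof.
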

\begin{proof}
According to the theorem 2 in \cite{Kowalski-Szenthe}, all nonzero element of $\rho$ are geodesic vectors with respect to $\left\langle ,\right\rangle $. So, using  the theorem 2.3 in \cite{Salimi-Parhizkar}, show that these elements are geodesic vectors of $(M,F)$.
\end{proof}

\begin{definition}
A homogeneous Riemannian manifold $G/H$ with an invariant Riemannian metric $\textbf{\emph{a}}$ is called a naturally reductive homogeneous space if there exists a reductive decomposition $\mathfrak{g}=\mathfrak{m} +\mathfrak{h}$ such that\\
\begin{equation}
\textbf{\emph{b}}(X,[Z,Y]_{\mathfrak{m}})+\textbf{\emph{b}}([Z,X]_{\mathfrak{m}},Y)=0 \qquad X,Y,Z \in \mathfrak{m},
\end{equation}
where $\textbf{\emph{b}}$ is the induced inner product on $ \mathfrak{m}$ by $\textbf{\emph{a}}$ and the subscript $\mathfrak{m}$ means the corresponding projection.
\end{definition}

In the literature, there are two different definitions of naturally reductive Finsler manifolds (see \cite{Deng-Hou-1} and \cite{Latifi}). One of them which is defined by Deng and Hou in \cite{Deng-Hou-1}, is as follows
\begin{definition}
Let $(M=G/H,F)$ be a  homogeneous Finsler space with an invariant Finsler metric. We call $(M=G/H,F)$ is a naturally reductive homogeneous space if there exists an invariant Riemannian metric $\textbf{\emph{a}}$ which $(M=G/H,\textbf{\emph{a}})$ is naturally reductive and the connections of $F$ and $\textbf{\emph{a}}$ coincide.
\end{definition}

\begin{cor}
Suppose that $(M=G/H,F)$ is a homogeneous Finslerian manifold such that $G$ is a semi-simple group of isometries and  $F$ is a $(\alpha ,\beta )$-metric arisen from an invariant Riemannian metric $\left\langle ,\right\rangle $ and an invariant vector field $\tilde{X}$ on $M$. Let  for any $Y\in \mathfrak{g}\setminus \{0\} $ and $Z \in \mathfrak{m}$ the equality $\left\langle X,[Y,Z]_{\mathfrak{m}}\right\rangle =0$ holds and moreover $\varphi'' (r_{\mathfrak{m}}) \leq 0$, where $r_{\mathfrak{m}}=\frac{\left\langle X,Y_{\mathfrak{m}}\right\rangle }{\sqrt{\left\langle Y_{\mathfrak{m}}, Y_{\mathfrak{m}}\right\rangle }}$. If $(M,\left\langle ,\right\rangle )$ is an isotropy irreducible homogeneous manifold, then $(M,F)$ is naturally reductive.
\end{cor}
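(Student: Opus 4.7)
The strategy is to apply isotropy irreducibility via Schur's lemma to force the invariant Riemannian metric to be proportional to the restriction of the Killing form to $\mathfrak{m}$, which simultaneously makes the previous proposition applicable to every vector in $\mathfrak{m}$ and makes $(M,\langle,\rangle)$ naturally reductive, so that the Deng--Hou definition can be verified with $\textbf{\emph{a}} = \langle,\rangle$.

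First, I would observe that both $\langle,\rangle|_{\mathfrak{m}}$ and the restriction $\textbf{\textsf{K}}|_{\mathfrak{m}}$ of the Killing form are $\mathrm{Ad}(H)$-invariant symmetric bilinear forms on $\mathfrak{m}$. Since the isotropy representation is irreducible, Schur's lemma yields a nonzero constant $c$ with $\textbf{\textsf{K}}|_{\mathfrak{m}} = c\langle,\rangle$. Consequently the $\textbf{\textsf{K}}$-symmetric endomorphism $\theta$ introduced before the previous proposition coincides with $c\cdot\mathrm{id}_{\mathfrak{m}}$, so that every nonzero element of $\mathfrak{m}$ is an eigenvector of $\theta$. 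Two consequences follow: first, the ad-invariance of $\textbf{\textsf{K}}$ together with the $\textbf{\textsf{K}}$-orthogonality of $\mathfrak{m}$ and $\mathfrak{h}$ produces the naturally reductive identity
\[
\textbf{\textsf{K}}(U,[W,V]_{\mathfrak{m}}) + \textbf{\textsf{K}}([W,U]_{\mathfrak{m}}, V) = 0, \qquad U,V,W \in \mathfrak{m},
\]
which passes via the scalar $c$ to $\langle,\rangle$, so that $(M,\langle,\rangle)$ is a naturally reductive Riemannian homogeneous space; second, the previous proposition applies to each nonzero $Y \in \mathfrak{m}$, making every such $Y$ a geodesic vector of $(M,F)$.

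To complete the verification of the Deng--Hou definition, I would show that the connection of $F$ coincides with the Levi--Civita connection of $\langle,\rangle$. The hypothesis $\langle X,[Y,Z]_{\mathfrak{m}}\rangle = 0$ for all $Y \in \mathfrak{g}\setminus\{0\}$ and $Z \in \mathfrak{m}$, together with the natural reductivity of $\langle,\rangle$ established above, forces the invariant 1-form $\beta(\cdot) = \langle \tilde{X}, \cdot\rangle$ to be parallel with respect to the Levi--Civita connection of $\alpha = \sqrt{\langle,\rangle}$. This is the classical Berwald criterion for $(\alpha,\beta)$-metrics, and it makes $F$ a Berwald metric whose Berwald/Chern connection equals the Levi--Civita connection of $\alpha$. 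With $\textbf{\emph{a}} = \langle,\rangle$ both clauses of the Deng--Hou definition are therefore satisfied, and $(M,F)$ is naturally reductive.

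The most delicate step is extracting the parallelism of $\tilde{X}$ from the algebraic identity $\langle X,[\mathfrak{g},\mathfrak{m}]_{\mathfrak{m}}\rangle = 0$; this requires feeding the hypothesis into the Koszul formula on the now-naturally-reductive space and is the main computational content of the proof. Everything else is essentially a bookkeeping assembly of Schur's lemma, the ad-invariance of the Killing form, the previous proposition, and the Berwald criterion.
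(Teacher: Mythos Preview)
Your argument is correct, but it takes a different route from the paper's. The authors proceed in three citations: by Theorem~2 of Kowalski--Szenthe, isotropy irreducibility makes every nonzero vector of $\mathfrak{m}$ a geodesic vector of $(M,\langle,\rangle)$; by Theorem~2.3 of Parhizkar--Salimi~Moghaddam together with the hypotheses on $X$ and $\varphi''$, each such vector is then a geodesic vector of $(M,F)$; and finally Theorem~3.1 of Deng--Hou's paper on naturally reductive homogeneous Finsler spaces converts ``every vector in $\mathfrak{m}$ is a geodesic vector'' directly into natural reductivity. You instead unpack the Schur/Killing-form step to show $(M,\langle,\rangle)$ itself is naturally reductive and then verify the Deng--Hou definition by hand, using the hypothesis $\langle X,[\mathfrak{g},\mathfrak{m}]_\mathfrak{m}\rangle=0$ together with natural reductivity to force $[Y,X]_\mathfrak{m}=0$ for all $Y\in\mathfrak{m}$, hence $\tilde{X}$ parallel, hence $F$ Berwald with Chern connection equal to the Levi--Civita connection of $\langle,\rangle$. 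What you gain is a self-contained argument that does not rely on the Deng--Hou characterization as a black box; what you lose is brevity, since the paper's proof is essentially three lines. Note also that your observation that every $Y\in\mathfrak{m}$ is a geodesic vector of $(M,F)$ is not actually used in your verification of the definition, whereas for the paper it is the whole point.
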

\begin{proof}
By theorem 2 in \cite{Kowalski-Szenthe}, all element belonging to $\mathfrak{m}$ are geodesic vectors of $(M,\left\langle ,\right\rangle )$. Hence with the assumptions and notations of theorem 2.3 in \cite{Salimi-Parhizkar}, these are  geodesic vectors of $(M,F)$ and so by theorem 3.1 in \cite{Deng-Hou-3} $(M,F)$  is naturally reductive.
\end{proof}

In the end we discuss concerning homogeneous geodesic vectors on  3-dimensional non-unimodular Lie groups equipped with invariant Randers metric of Douglas type.
Suppose that $G$ is a 3-dimensional non-unimodular Lie group with Lie algebra $\mathfrak{g}$ equipped with a left invariant Riemannian metric $\left\langle ,\right\rangle $. Then, there exists an orthonormal basis $\{e_1,e_2,e_3\}$ of $\mathfrak{g}$ such that the bracket is expressed as
\begin{align*}
&[e_1,e_2]=\alpha e_2+\beta e_3, \\
&[e_1,e_3]=\gamma e_2+\delta e_3, \\
&[e_2,e_3]=0,
\end{align*}
where $\alpha$, $\beta$, $\gamma$, $\delta$ are real numbers such that $\alpha + \delta \neq 0$ and $\alpha \gamma +\beta \delta =0$. This basis also diagonalizes the Ricci form (for more details see \cite{Milnor}). \\

Denote $D=( \beta + \gamma )^2-4 \alpha \delta$ such that all Ricci eigenvalues are distinct. Then, up to a reparametrization, the space $(G,\left\langle ,\right\rangle )$ admits just one or just two or just three homogeneous  geodesic through a point if $D < 0$ or $D=0$ or $D > 0$, respectively (see \cite{Kowalski-Nikcevic-Vlasek}, proposition 3.2). Here we generalize this result as follow
\begin{prop}
Let $(G,F)$ be a 3-dimensional non-unimodular Lie group with a Randers metric of Douglas type induced by a left invariant Reimannian metric $\left\langle ,\right\rangle $ and a left invariant vector field $\tilde{X}$. If $D$ is as above then, up to a reparametrization, $(G,F)$ admits  just one or just two or just three homogeneous  geodesics through a point for $D < 0$ or $D=0$ or $D > 0$, respectively. For the case $D=0$ they are mutually orthogonal and for $D>0$, they are linearly independent but never mutually orthogonal.
\end{prop}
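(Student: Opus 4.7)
The plan is to reduce the classification of homogeneous geodesics of the Randers space $(G,F)$ to the corresponding Riemannian problem already settled by Kowalski--Nik\v{c}evi\'{c}--Vla\v{s}ek, and then to read off the orthogonality and independence information from the explicit structure constants.

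First I would establish a Randers analogue of Corollary~3.3: for a left invariant Randers metric $F$ of Douglas type on a Lie group $G$, a nonzero vector $Y\in\mathfrak{g}$ is a geodesic vector of $(G,F)$ if and only if it is a geodesic vector of the underlying left invariant Riemannian metric $\left\langle,\right\rangle$. The argument would mirror the Kropina case: the Douglas condition for a left invariant Randers metric translates into the orthogonality relation $\langle \tilde{X},[\mathfrak{g},\mathfrak{g}]\rangle=0$, because in the left invariant setting closedness $d\beta=0$ is equivalent to $\beta([Y,Z])=0$ for all $Y,Z\in\mathfrak{g}$. Plugging this into Latifi's criterion specialised to Randers, the $\beta$-contributions cancel and one is left precisely with the Riemannian condition $\langle [Y,Z],Y\rangle=0$ for every $Z\in\mathfrak{g}$.

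Given this equivalence, the number of homogeneous geodesics through the identity of $(G,F)$, counted up to reparametrization, coincides with the Riemannian count, so Proposition~3.2 of \cite{Kowalski-Nikcevic-Vlasek} immediately yields just one, just two, or just three geodesic lines through the origin according to $D<0$, $D=0$, or $D>0$, respectively. For the geometric refinements, I would make the geodesic vector equations explicit in the orthonormal basis $\{e_1,e_2,e_3\}$. Writing $Y=y_1e_1+y_2e_2+y_3e_3$ and imposing $\langle[Y,e_i],Y\rangle=0$ for $i=1,2,3$, the bracket relations in the statement reduce the problem to the system
\begin{equation*}
y_1(\alpha y_2+\beta y_3)=0,\qquad y_1(\gamma y_2+\delta y_3)=0,\qquad \alpha y_2^2+(\beta+\gamma)y_2y_3+\delta y_3^2=0.
\end{equation*}

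The vector $e_1$ is always a solution; every other solution lies in $\mathrm{span}(e_2,e_3)$ and corresponds to a real root of $\alpha s^2+(\beta+\gamma)s+\delta=0$ with $s=y_2/y_3$, whose discriminant is exactly $D$. When $D=0$ the one additional geodesic vector sits in $\mathrm{span}(e_2,e_3)$ and is therefore $\left\langle,\right\rangle$-orthogonal to $e_1$. When $D>0$ the two additional geodesic vectors $s_1e_2+e_3$ and $s_2e_2+e_3$ are each orthogonal to $e_1$, and linear independence follows from $s_1\neq s_2$; mutual orthogonality between them would force $s_1s_2=-1$, i.e.\ $\delta/\alpha=-1$, which is ruled out by the non-unimodular hypothesis $\alpha+\delta\neq 0$. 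The main technical obstacle is the first step: producing a clean Randers Douglas-type characterisation that collapses Latifi's formula to the Riemannian geodesic vector equation. After that, everything reduces to inspection of the quadratic in $s$ together with the constraint $\alpha+\delta\neq 0$.
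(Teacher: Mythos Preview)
Your proposal is correct and follows essentially the same route as the paper: reduce the Randers Douglas-type geodesic vector condition to the Riemannian one, and then invoke Proposition~3.2 of Kowalski--Nik\v{c}evi\'{c}--Vla\v{s}ek. The paper obtains the reduction by quoting Corollary~2.7 of Yan--Deng rather than rederiving it, and it delegates the orthogonality and independence statements entirely to the cited Riemannian proposition, whereas you spell out the explicit system in the basis $\{e_1,e_2,e_3\}$ and check the quadratic discriminant and Vieta argument directly; but the underlying strategy is identical.
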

\begin{proof}
Using the corollary 2.7 in \cite{Yan-Deng}, $U$ is a geodesic vector of $(G,F)$ if and only if it is a geodesic vector of $(G,\left\langle ,\right\rangle )$. So we can use the same argument as in the proof of proposition 3.2 of \cite{Kowalski-Nikcevic-Vlasek} and prove this result.
\end{proof}

{\large{\textbf{Acknowledgment.}}} We are grateful to the office of Graduate Studies of the University of Isfahan for their support.

\end{document}